\definecolor{sxdarkblue}{RGB}{150,180,200} 
\definecolor{sxlightblue}{RGB}{186,215,230} 
\definecolor{sxred}{RGB}{153,0,0} 
\definecolor{tocolor}{rgb}{.1,.1,.1}
\definecolor{urlcolor}{rgb}{.2,.2,.6}
\definecolor{linkcolor}{rgb}{.1,.1,.5}
\definecolor{citecolor}{rgb}{.4,.2,.1}
\theoremstyle{plain} 
\newtheorem{theorem}{Theorem}[section]
\newcommand{\thdef}[2]{
	\newaliascnt{#1}{theorem}  
	\newtheorem{#1}[#1]{#2}
	\aliascntresetthe{#1}  
	\newtheorem*{#1*}{#2}
	\expandafter\newcommand\expandafter{\csname #1autorefname\endcsname}{#2}
}
\theoremstyle{definition} 
\newcommand{\rbrac}[1]{\left(#1\right)}
\newcommand{\abrac}[1]{\left\langle#1\right\rangle}
\newcommand{\set}[2]{\left\{#1\,\middle|\,#2\right\}}
\newcommand{\M}{{\mathcal M}}
\newcommand{\OO}{{\mathcal O}}
\newcommand{\C}{\mathbb{C}}
\newcommand{\Z}{\mathbb{Z}}
\renewcommand{\P}{\mathbb{P}}
\newcommand{\N}{\mathbb{N}}
\newcommand{\A}{\mathbb{A}}
\newcommand{\LL}{\mathbb{L}}
\newcommand{\half}{\frac{1}{2}}
\newcommand{\gln}[1]{\mathfrak{gl}_{#1}}
\newcommand{\h}{\mathfrak{h}}
\def\cC{\mathbb{C}}
\def\cL{\mathbb{L}}
\def\cN{\mathbb{N}}
\def\cP{\mathbb{P}}
\def\cZ{\mathbb{Z}}
\def\al{{\alpha}}
\def\be{{\beta}}
\def\pt{\rm pt}
\def\rinv{{\rm inv}}
\newcommand{\rL}[1][1]{\cL^{\frac{#1}{2}}}
\newcommand{\irL}[1][1]{\cL^{-\frac{#1}{2}}}
\newcommand{\stddenom}{\rL-\irL}
\def\Spec{\operatorname{Spec}}
\def\Hom{\operatorname{Hom}}
\def\End{\operatorname{End}}
\def\GL{\operatorname{GL}}
\def\SL{\operatorname{SL}}
\def\Exp{\operatorname{Exp}}
\def\crit{\operatorname{crit}}
\def\udim{\operatorname{\underline\dim}}
\def\vir{\mathrm{vir}}
\def\dd{{\partial}}
\def\ms{\backslash} 
\def\mto{\mapsto}
\def\inv{^{-1}}
\def\oh{{\frac12}}
\def\denom{\cL^\oh-\cL^{-\oh}}
\def\gM{\mathfrak{M}}
\def\sF{\mathcal{F}}
\def\GG{G}
\def\pt{\mathrm{pt}}
\def\con{\mathrm{con}}
\def\DT{\mathrm{DT}}
\begin{document}

\title[Motivic Donaldson--Thomas invariants of quantized threefolds]{Motivic Donaldson--Thomas invariants \\ of some quantized threefolds}
\author{Alberto Cazzaniga, Andrew Morrison, Brent Pym and Bal\'azs
  Szendr\H oi}

\maketitle

\vspace{-0.1in}
\begin{center} 
{\it In memory of Kentaro Nagao}
\end{center}

\begin{abstract}
This paper is motivated by the question of how motivic Donaldson--Thomas invariants behave in families. We compute the invariants for some simple families of noncommutative Calabi--Yau threefolds, defined by quivers with homogeneous potentials.  These families give deformation quantizations of affine three-space, the resolved conifold, and the resolution of the transversal $A_n$-singularity. It turns out that their invariants are generically constant, but jump at special values of the deformation parameter, such as roots of unity. The corresponding generating series are written in closed form, as plethystic exponentials of simple rational functions. While our results are limited by the standard dimensional reduction techniques that we employ, they nevertheless allow us to conjecture formulae for more interesting cases, such as the elliptic Sklyanin algebras.
\end{abstract}

\medskip

\thispagestyle{empty}

\section{Introduction}

Donaldson--Thomas (DT) invariants were introduced by Donaldson and Thomas in~\cite{DT, T} to give 
a numerical count of sheaves 
on three-dimensional projective Calabi--Yau varieties. One of the fundamental results of~\cite{T} was the
statement that these integer quantities are indeed invariants, in the sense that they are unchanged 
when the 
underlying Calabi--Yau threefold moves in a connected projective family. It was later realized that DT-like 
invariants can be defined by counting objects in more general 3-Calabi--Yau categories, such as categories
defined by a quiver with potential~\cite{Sze}.
In a different direction, building on work of Behrend~\cite{Be}, DT-like
invariants taking values in more general rings and not just in $\Z$, such as rings of (naive) 
motives~\cite{BBS, KS}, were defined. 

In this paper, we are interested in how motivic DT invariants behave in families. In the projective
case, this problem seems difficult to study, since it is hard to compute motivic DT invariants of 
projective Calabi--Yau varieties in all but a handful of cases. 
Here, we instead study deformation properties of motivic DT invariants for some families of \emph{noncommutative} Calabi--Yau threefolds, defined by quivers with homogeneous potentials. The motivic invariants we look at are attached to moduli spaces of finite-dimensional representations of the Jacobian algebra associated to the quiver with potential; this should be seen as the noncommutative analogue of studying moduli of finite sets of points on commutative threefolds. Moduli spaces of homogeneous potentials were studied recently in~\cite{OU} in a specific example, where the question of the behaviour of motivic DT invariants was also raised. We can regard a family of {\em graded} deformations of a homogeneous potential as an analogue of a projective family in the local noncommutative situation. 

The results we are reporting on here are rather limited.  In particular, we do not introduce any new techniques for computing DT invariants in this paper; rather we use the 
dimensional reduction technique already used in~\cite{BBS}, and systematized in~\cite{Mor},
to see what we can learn about the structure of the invariants and their deformation properties. 
Thus, the cases in which we have been able to compute the generating series for the motivic DT invariants 
are the cases in which the potential is linear with respect to one of the generators of the algebra.

We explore only the simplest deformations
of potentials for some well-studied quivers of geometric origin: the three-loop quiver
underlying the Hilbert scheme of points on threefolds; the conifold quiver; and, generalizing the first,
the cyclic quiver with loops. In the case of undeformed potentials, corresponding to commutative Calabi--Yau threefolds, 
the motivic invariants in these examples
were computed in \cite{BBS}, \cite{MMNS} and \cite{BM, Mor}, respectively. We study some simple perturbations  of these potentials, corresponding to certain deformation quantizations of the commutative threefolds, and compute or conjecture the corresponding motivic DT invariants.    
One reason for focusing on the three-loop quiver, in particular, is that such homogeneous deformations of the potential correspond to marginal deformations of $N=4$ super Yang--Mills theory~\cite{BJL}; our formulae therefore correspond to refined BPS counts for these deformed theories~\cite{DG}.

The motivic DT invariants for our families are certainly not deformation-invari\-ant. However, we find that they behave in rather
well-controlled ways, having constant generic value, but jumping at special values of parameters (such as roots of unity) for which the quantized algebras become finite modules over their centres.  
At least for our simple quivers, the generating series of motivic DT invariants have a surprisingly simple form:
they are motivic ``Exponentials'' of rather simple rational functions of the vertex variables. The fact that 
generating functions of DT invariants are Exponentials is now well known and underlies rationality
(also known as BPS) conjectures and theorems~\cite{JS,KS}. 
However, the fact that inside the Exponential we often have simple
rational functions seems not to have been observed before in this generality.

The rational functions, including their coefficients, appear to be governed by {\em simple} 
finite-dimensional representations of the corresponding Jacobi algebra. In the 
case of the three-loop quiver, based on our results (Theorems~\ref{thm_quantumC3}-\ref{thm_jordan}) we are able to articulate a somewhat more precise conjectural 
formula~\eqref{conj_Q1}, and use it to predict the answer in some cases 
which we cannot access via dimensional reduction---the homogenized Weyl algebra and the elliptic Sklyanin algebras (Conjectures~~\ref{conj_weyl}-\ref{conj_Skly}). 
In multi-vertex cases, our results (Theorems~\ref{thm_qconifold}-\ref{thm:cyclicAn}) do not lead to any precise conjecture. Nevertheless, the coefficients in the rational functions still have intriguing geometric interpretations: they seem to correspond to the degeneracy loci of the Poisson structures that are quantized in order to produce the given families of noncommutative algebras.  We thus uncover an intriguing landscape, which we leave for further study.

The paper is organized as follows.  In \autoref{sec_prelim}, we give a brief review of the necessary preliminaries about quiver representations; the ring of motivic classes; and the definition of motivic DT invariants and their generating series.  In \autoref{sec_results}, we summarize our computations and conjectures for the generating series in the examples of interest, and we discuss their geometric interpretations.  We finish in \autoref{sec_proofs} with the proofs.

\smallskip

\noindent{\bf Acknowledgements.} A.C.~and B.Sz.~were supported by EPSRC Programme Grant EP/I033343/1 during the preparation of this paper, and B.P.~was supported by EPSRC Grant EP/K033654/1.  The authors would like to acknowledge helpful conversations and correspondence with Ivan Cheltsov, Ben Davison, Tommaso de Fernex, Kevin De Laet, Dominic Joyce, Toby Stafford and Chelsea Walton.
\section{Preliminaries}
\label{sec_prelim}

\subsection{Quivers and their representations}

Let $Q$ be a finite quiver, with vertex set $V(Q)$ and arrow set $E(Q)$.
For an arrow $a\in E(Q)$, denote by $s(a)\in V(Q)$, respectively
$t(a)\in V(Q)$, the vertex at which $a$ starts, respectively ends.
The Euler-Ringel form $\chi$ on $\cZ^{V(Q)}$ is 
$$\chi(\al,\be)=\sum_{i\in V(Q)}\al_i\be_i-\sum_{a\in E(Q)}\al_{s(a)}\be_{t(a)},\qquad \al,\be\in\cZ^{V(Q)}.$$
Given a $Q$-representation $M$, its dimension vector 
$\udim M\in\cN^{V(Q)}$ is $\udim M=(\dim M_i)_{i\in V(Q)}$.
Let $\al\in\cN^{V(Q)}$ be a dimension vector and let $V_i=\cC^{\al_i}$, $i\in V(Q)$. Let
$$R(Q,\al)=\bigoplus_{a\in {E(Q)}}\Hom(V_{s(a)},V_{t(a)})$$
and 
\[\GG_\al=\prod_{i\in V(Q)}\GL(V_i).\]
Then $\GG_\al$ naturally acts on $R(Q,\al)$, and the quotient stack
$$\gM(Q,\al)=[R(Q,\al)/\GG_\al]$$
gives the moduli stack  of representations of $Q$ with dimension vector $\al$.

Let $W$ be a potential on $Q$, a finite linear combination of cyclic paths in $Q$. Denote by $J_{Q,W}$ 
the Jacobian algebra, the quotient of the path algebra $\C Q$ by the two-sided ideal generated by formal partial
derivatives of the potential $W$. Let
$$f_\al:R(Q,\al)\to\cC$$ be the $\GG_\al$-invariant function defined by taking the trace of the map associated 
to the potential $W$. A point in the critical locus 
$\crit(f_\al)$ corresponds to a $J_{Q,W}$-module. The quotient stack 
$$\gM(J_{Q,W}, \al)=\bigl[\crit(f_\al)/\GG_\al\bigr]$$
gives the moduli stack of $J_{Q,W}$-modules with dimension vector $\al$.

\subsection{The ring of motivic classes}

Let $K^{\hat\mu}({\rm Var}_{\mathbb C})$ be the ring of isomorphism 
classes of reduced varieties over ${\mathbb C}$, equipped with a good action
of a finite group of roots of unity, respecting the scissor relation
and the relation $[\A^n, \mu_k] = [\A^n]$ for a linear representation 
of the group $\mu_k$ on affine space $\A^n$, as in~\cite{L}.  Here $\mu_k$ denotes the group of $k$th roots of unity.

Let $\cL=[\A^1]\in K^{\hat\mu}({\rm Var}_{\mathbb C})$ be the class of the affine line with trivial action.  Then it is known that $\cL$ admits a square root $\cL^{\half} \in K^{\hat\mu}({\rm Var}_{\mathbb C})$.  We work in the motivic ring
\[
\M = \left(K^{\hat\mu}({\rm Var}_{\mathbb C})/\mathrm{Ann}(\cL)\right)[\cL^{-\half}, (1-\cL^n)^{-1}\colon n\geq 1],
\]
where $\mathrm{Ann}(\cL)$ denotes the annihilator~\cite{Bo} of $\cL$ in $K^{\hat\mu}({\rm Var}_{\mathbb C})$. 
We have the Euler characteristic specialization defined on
classes of varieties by $[X]\mapsto \chi(X)$ (with or without compact support, which agree),
and $\cL^\oh\mto -1$; this is of course well-defined only
on a subset of elements of $\M$ which we call {\em motives without denominators}. 
 
For a regular function $f\colon X \to
\cC$ on a smooth variety $X$, Denef and Loeser \cite{DL,L} define the motivic nearby 
cycle $[\psi_f]\in \M$ and the motivic vanishing cycle
$ [\varphi_f]=[\psi_f]-[f\inv(0)]\in \M$ of~$f$ (at the value $0\in \cC$, the only case
that will be relevant to us since all our functions $f$ will be homogeneous). 
For $f=0$, we have $[\varphi_0]=-[X]$.
Given a global critical locus
$Z=\{df=0\}\subset X$ for $f\colon X\to\C$ on a smooth complex variety
$X$, define the virtual motive of $Z$ by 
\[
[Z]_\vir = -(-\cL^\oh)^{-\dim X}[\varphi_f]\in \M.
\]
Thus for a smooth variety $X$ with $f=0$, we have
\[
[X]_\vir=(-\cL^\oh)^{-\dim X}\cdot [X].
\]

The ring $\M$ is known to be a so-called pre-$\lambda$-ring, with operations $\sigma_n:\M \to \M$ for $n\geq 0$
satisfying certain natural compatibilities~\cite{GSLMH, DM}. For classes $[X]\in\M$ represented by 
quasiprojective varieties $X$, we have $\sigma_n([X])=[\mathrm{Sym}^n(X)]$, and thus $\sigma_n(\cL)=\cL^n$; we also 
have $\sigma_n(-\cL^\half) = (-\cL^\half)^n$. There is an induced 
pre-$\lambda$-ring structure on the power series ring $\M[[t_1, \ldots, t_k]]$ defined by
$\sigma_n(rt_j^i)= \sigma_n(r)t_j^{ni}$. Denoting finally by 
$\M[[t_1, \ldots, t_k]]_+\subset \M[[t_1, \ldots, t_k]]$ the ideal generated by $t_1, \ldots, t_k$, we define the plethystic exponential
\[\Exp\colon \M[[t_1, \ldots, t_k]]_+\to 1+\M[[t_1, \ldots, t_k]]_+\]
by the formula
\[ \Exp(r) = \sum_{n\geq 0} \sigma_n(r).
\]
See~\cite[Section 2.5]{BBS} as well as~\cite[Section 2]{DM2} for a more leisurely introduction 
to aspects of this formalism and some more explicit formulae. 

\subsection{Statement of the problem}

Given a quiver with potential $(Q,W)$, we define motivic Donaldson-Thomas invariants
\[ [\gM(J_{Q,W},\al)]_\vir=\frac{[\crit(f_\al)]_\vir}{[\GG_\al]_\vir},\]
where $[\GG_\al]_\vir$ refers to the virtual motive of the pair $(\GG_\al,0)$.  
We package these invariants into a generating series by introducing a set $t=(t_i\colon i\in V(Q))$ of auxiliary variables, and setting
\[U_{Q,W}(t) =\sum_{\al\in\cN^{V(Q)}}[\gM(J_{Q,W},\al)]_\vir \cdot t^\al
,\]
where we use multi-index notation for monomials $t^\al$.
Our aim is to compute the series $U_{Q,W}(t)$ in closed form for some interesting classes of pairs $(Q,W)$. We 
will particularly be interested in how $U_{Q,W}(t)$ changes for a fixed $Q$ under homogeneous
deformations of the potential $W$. 

The series $U_{Q,W}$ is called the universal DT series in~\cite{MMNS}. Generating series of framed 
invariants are related to $U_{Q,W}$ by wall crossing~\cite{JS, KS, Moz, N}.

\section{Results and interpretations}
\label{sec_results}

\subsection{Deformations of  affine three-space}

Let $Q_1$ be the quiver corresponding to affine three-space~\cite{BBS}, with $V(Q_1)$ containing
a single vertex and $E(Q_1)=\{x,y,z\}$ consisting of three loops based at the vertex as shown in \autoref{fig_Q1}.
\vspace{-2em}
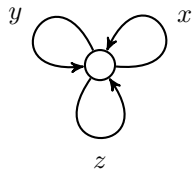
\begin{figure}[H]
\begin{center}
\begin{tikzpicture}[>=stealth',auto,node distance=1cm,
  thick,main node/.style={circle,draw,inner sep=0pt,minimum size=0.4cm}]
\node[main node] (1) {};
\draw [->] (1) .. controls (-5:1.5) and (65:1.5) .. (1);
\draw [->] (1) .. controls (115:1.5) and (185:1.5) .. (1);
\draw [->] (1) .. controls (235:1.5) and (305:1.5) .. (1);
\draw (30:1.3) node {$x$};
\draw (150:1.3) node {$y$};
\draw (270:1.3) node {$z$};
\end{tikzpicture}
\caption{The quiver $Q_1$.}\label{fig_Q1}
\end{center}
\end{figure}
\vspace{-2em}
When the potential on $Q_1$ is  $W=xyz-xzy$, the Jacobian algebra $J_{Q_1,W}$ is simply the polynomial ring $\C[x,y,z]$, corresponding to the simplest commutative Calabi--Yau threefold: affine three space $\C^3$.  This perspective was used in~\cite{BBS} to compute the motivic DT invariants of Hilbert schemes of threefolds.

We will consider homogeneous cubic deformations of the potential $W$, resulting in flat deformations of the polynomial ring as a graded Calabi--Yau algebra.  These algebras correspond to quantizations (i.e.~noncommutative deformations) of~$\C^3$ and its projectivization~$\cP^2$.  Such deformations have received substantial attention in the noncommutative geometry literature, starting with~\cite{ATVdB,AS}.  In particular, considerable work has been done on their representation theory~\cite{ATVdB2,DeL,DeLLeB,W}.

\subsubsection{Quantum affine three-space}

Start with the potential \[W_q=xyz-q xzy\] on $Q_1$, where $q\in\C^*$ is a constant.   The corresponding Jacobian algebra 
$J_{Q_1, W_q}$ is the coordinate ring of quantum affine three-space
\[J_{Q_1,W_q} = \C\abrac{x, y, z} / \rbrac{ [x,y]_q, [y,z]_q, [z, x]_q }
\]
with $[a,b]_q=ab-qba$.  The requirement that $q$ be nonzero is important, as it ensures that the algebra is Calabi--Yau.

Since the quiver has only one vertex, a dimension vector is just a single number, indicating the dimension of the representation, so that the universal series is a function of a single variable $t$.  Corresponding to the fact that $W_q$ is linear in the generator $z$, the algebra $J_{Q_1,W_q}$ has an extra $\C^*$-symmetry, given by rescaling $z$.  In \autoref{sec_qC3_proof}, we exploit this symmetry to prove the following

\begin{theorem} \label{thm_quantumC3}
If $q\in\C^*$ is a primitive $r$th root of unity, then 
\[U_{Q_1,W_q}(t) = \Exp\left( \frac{2\mathbb{L}-1}{\mathbb{L}-1}\frac{t}{1-t} + (\mathbb{L}-1)\frac{t^r}{1-t^r} \right). \]
Otherwise, 
\[ U_{Q_1,W_q}(t) =  \Exp\left( \frac{2\mathbb{L}-1}{\mathbb{L}-1}\frac{t}{1-t} \right).\]
\end{theorem}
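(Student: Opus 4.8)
The plan is to exploit the dimensional-reduction technique of \cite{BBS, Mor}, made available by the fact that $W_q = xyz - qxzy$ is linear in $z$. Writing a representation of dimension $n$ as a triple $(X,Y,Z) \in \gln{n}^{\oplus 3}$, the invariant function $f_n$ equals $\operatorname{tr}(Z(XY - qYX)) = \operatorname{tr}(Z\cdot[X,Y]_q)$. Since this is linear in the coordinate $Z$, the motivic vanishing cycle $[\varphi_{f_n}]$ can be computed by dimensional reduction: up to the appropriate power of $\cL^\oh$, $[\crit(f_n)]_\vir$ reduces to the (shifted) motivic class of the locus in $\gln{n}^{\oplus 2}$ cut out by the single matrix equation $[X,Y]_q = XY - qYX = 0$, i.e.\ the variety of $n$-dimensional representations of the quantum plane $\C\abrac{x,y}/([x,y]_q)$. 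Concretely, one gets
\[
[\gM(J_{Q_1,W_q},n)]_\vir = (-\cL^\oh)^{-\dim} \cdot \frac{[\{(X,Y) : XY = qYX\}]}{[\GL_n]}
\]
for the correct exponent, and hence $U_{Q_1,W_q}(t) = \sum_n (\text{this}) \, t^n$.

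The core of the argument is then a stratification of the commuting-type variety $\{XY = qYX\} \subset \gln{n}^{\oplus 2}$ according to the conjugacy type of $X$ (or, more symmetrically, by the joint module structure). When $q$ is \emph{not} a root of unity, the relation $XY = qYX$ forces $X$ and $Y$ to be "as nilpotent as possible" on the generalized eigenspaces: if $v$ is a $\lambda$-eigenvector of $X$ then $Yv$ is a $q\lambda$-eigenvector, so a finite-dimensional module decomposes as a direct sum of "string" modules supported on a single $\C^*$-orbit $\{q^k\lambda\}$, and only $\lambda = 0$ gives orbits that stay inside a finite-dimensional module — this is precisely the kind of combinatorial bookkeeping that produces the generating function $\Exp\big(\tfrac{2\cL-1}{\cL-1}\tfrac{t}{1-t}\big)$, matching the known commutative answer of \cite{BBS} because the representation theory "sees" only the nilpotent part. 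When $q$ \emph{is} a primitive $r$th root of unity, the quantum plane (and the ambient algebra) becomes a finite module over its centre $\C[x^r, y^r, z^r]$, so now eigenvalue orbits are finite of size dividing $r$, producing genuinely new $r$-dimensional simple modules parametrized by a torus; these contribute the extra summand $(\cL-1)\tfrac{t^r}{1-t^r}$ inside the Exponential. The bridge between "stratify the representation variety" and "write the answer as a plethystic Exponential" is the standard fact (underlying the wall-crossing/integrality story of \cite{JS, KS}) that the universal series is the Exponential of the generating series of the \emph{stacky} classes of simple modules, weighted by their motivic automorphism groups; I would either invoke this directly or reprove the needed instance by the power-structure manipulations of \cite[\S 2.5]{BBS}.

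I expect the main obstacle to be the precise motivic bookkeeping of the stratification: controlling the classes of the strata of $\{XY = qYX\}$ as one varies the Jordan type, keeping track of the $\GL_n$-action and the resulting stabilizers, and then resumming to recognize the closed form. Two points need care: first, in the root-of-unity case one must correctly identify the locus of $r$-dimensional simples and show its stacky class contributes exactly $(\cL-1)\tfrac{t^r}{1-t^r}$ (the $\hat\mu$-equivariance and the $[\A^n,\mu_k]=[\A^n]$ relation should make the monodromy contributions trivial, but this must be checked); second, one must verify that the dimensional-reduction shift $(-\cL^\oh)^{-\dim X}$ combines with $[\GL_n]_\vir$ to give exactly the stated normalization, so that the "trivial" $\tfrac{2\cL-1}{\cL-1}$ part reproduces the $\C^3$ formula of \cite{BBS} on the nose rather than up to a correction factor. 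Everything else — linearity giving dimensional reduction, the $\C^*$-rescaling symmetry organizing the eigenvalue orbits, and the passage to $\Exp$ — is routine given the machinery already set up in the preliminaries.
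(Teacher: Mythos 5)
Your overall strategy matches the paper's: dimensional reduction along the cut $I=\{z\}$ reduces the computation to the motivic classes of the $q$-commuting varieties $R_q(n) = \{(X,Y) : XY = qYX\}$, and the paper then studies these by splitting according to invertibility.

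However, your sketch of the stratification contains a real error. You claim that for $q$ not a root of unity, ``only $\lambda = 0$ gives orbits that stay inside a finite-dimensional module,'' suggesting that the answer comes from nilpotent pairs alone. This is false: $X$ may be invertible provided $Y$ acts nilpotently on the $X$-eigenspaces (e.g.\ $X = \lambda I$, $Y = 0$), and these contributions do not vanish. The paper's Lemma~\ref{nilinvlem} gives the correct replacement: the universal series factors as a product $U_q^{I,I}\cdot U_q^{I,N}\cdot U_q^{N,I}\cdot U_q^{N,N}$ according to the invertible/nilpotent types of \emph{both} $X$ and $Y$, and for generic $q$ only the doubly-invertible factor $U_q^{I,I}$ is trivial. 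Indeed $U_q^{N,N}\cdot U_q^{N,I}\cdot U_q^{I,N} = \Exp\bigl(\bigl(\tfrac{1}{\cL-1}+2\bigr)\tfrac{t}{1-t}\bigr) = \Exp\bigl(\tfrac{2\cL-1}{\cL-1}\tfrac{t}{1-t}\bigr)$; the purely nilpotent piece alone would give only $\tfrac{1}{\cL-1}$ and the wrong answer. A related error: you state that this formula ``match[es] the known commutative answer of [BBS],'' but the commutative ($q=1$) answer of~\cite{BBS} is $\Exp\bigl(\tfrac{\cL^2}{\cL-1}\tfrac{t}{1-t}\bigr)$, differing by the extra factor $U_1^{I,I}=\Exp\bigl((\cL-1)\tfrac{t}{1-t}\bigr)$, which survives precisely because at $q=1$ the torus contribution does not vanish. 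Finally, your bridge to the plethystic Exponential via the motivic integrality/wall-crossing theorem is a much heavier input than the paper uses: the paper simply proves (via local triviality over nilpotent orbits, using a conjugating matrix $P$ with $PA_0P^{-1}=q^{-1}A_0$) that the three factors $U_q^{N,N},U_q^{N,I},U_q^{I,N}$ are $q$-independent, then reads off their values from the explicit Feit--Fine-type formulae at $q=1$; no integrality theorem is invoked. The root-of-unity term comes from identifying $U_q^{I,I}$ with finite-length sheaves on the quantum torus $[\C^*/\cZ]_q \cong [\C^*/\cZ]_1$ via the $r$th power map and applying the power structure, which is more precise than the informal ``parametrized by a torus'' you give.
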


\subsubsection{The Jordan deformation}

Up to isomorphism, the only other deformation of $W$ that is linear in one of the generators is given by
\[
W_J = xyz-xzy - zy^2.
\] 
One of the relations of the Jacobian algebra $J_{Q_1, W_J}$ is $[x,y]=y^2$, which is precisely the relation between 
the generators $x,y$ of the non-commutative affine plane commonly known as the Jordan plane.  In \autoref{sec_Jordan_proof}, we prove:

\begin{theorem} For the Jordan deformation, we have
\[
U_{Q_1,W_J}(t) = \Exp\left(\frac{\cL}{\cL-1}\frac{t}{1-t}\right).
\]
\label{thm_jordan}\end{theorem}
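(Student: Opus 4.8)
The plan is to follow the dimensional-reduction strategy that succeeds for quantum affine three-space, exploiting the fact that $W_J = xyz - xzy - zy^2$ is linear in the generator $z$. Writing the trace function $f_\al$ on $R(Q_1,\al) = \End(V)^{\oplus 3}$ (with $V = \C^\al$) in coordinates $(X,Y,Z)$, linearity in $z$ gives $f_\al(X,Y,Z) = \operatorname{tr}\bigl(Z\cdot([X,Y] - Y^2)\bigr)$. By the dimensional reduction principle of~\cite{Mor} (as used in~\cite{BBS}), the motivic vanishing cycle of such a function equals, up to the expected power of $\cL^\oh$, the motivic class of the scheme $\{[X,Y] = Y^2\} \subset \End(V)^{\oplus 2}$, so that
\[
[\gM(J_{Q_1,W_J},\al)]_\vir = (-\cL^\oh)^{-\alpha^2}\,\frac{[Z(\al)]}{[\GG_\al]_\vir}\cdot(\text{power of }\cL),
\]
where $Z(\al) = \{(X,Y)\in\End(V)^{\oplus 2} : XY - YX = Y^2\}$ and I will track the exact normalization carefully. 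This reduces the problem to computing the motivic class $[Z(\al)]$ of the commuting-type variety attached to the Jordan plane.

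Next I would compute $[Z(\al)]$ by stratifying according to the conjugacy class of the nilpotent-like operator $Y$, or more efficiently by a direct fibration argument: the relation $XY - YX = Y^2$ says that $X$ normalizes the flag associated to $Y$ in a controlled way, i.e.\ for fixed $Y$ the space of admissible $X$ is an affine space whose dimension depends only on the conjugacy class of $Y$. Here one uses the representation theory of the Jordan plane: finite-dimensional modules decompose, and the relevant combinatorics is governed by partitions recording the sizes of the "Jordan-plane blocks." An alternative, and probably cleaner, route is to pass directly to the generating series: since the category of finite-dimensional $J_{Q_1,W_J}$-modules is controlled by its simple objects, and the Jordan plane has a single one-dimensional simple (supported at $y = 0$) contributing a class like $\frac{\cL}{\cL-1}$ to the "BPS" function, I would try to identify $\log\bigl(U_{Q_1,W_J}\bigr)$ directly using the motivic wall-crossing / integration-map formalism, checking that the BPS contribution is exactly $\frac{\cL}{\cL-1}\frac{t}{1-t}$ and hence $U_{Q_1,W_J}(t) = \Exp\bigl(\tfrac{\cL}{\cL-1}\tfrac{t}{1-t}\bigr)$.

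Concretely, the cleanest implementation is: (i) apply dimensional reduction to reduce to $[Z(\al)]/[\GG_\al]_\vir$; (ii) observe that $Z(\al)$ is (up to a vector-bundle factor) the space of pairs $(X,Y)$ with $Y$ ranging over its orbits and $X$ over an affine space of computable dimension, which one organizes as a stack quotient; (iii) recognize the resulting stacky count as the "motivic DT series of the preprojective-type algebra" of the Jordan plane and sum it. A useful sanity check along the way: setting $\cL^\oh \mapsto -1$ recovers the numerical (Euler-characteristic) DT series $\Exp\bigl(-\tfrac{t}{1-t}\bigr) = \prod_{n\ge 1}(1-t^n)$, which should match the known point-counting / Euler-characteristic computation for the Jordan plane; and comparing with Theorem~\ref{thm_quantumC3} at a generic (non-root-of-unity) $q$ confirms that the Jordan deformation genuinely changes the invariant, since $\frac{\cL}{\cL-1} \ne \frac{2\cL-1}{\cL-1}$.

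The main obstacle I anticipate is step (ii): correctly computing the motivic class $[Z(\al)]$ of the Jordan-plane commuting variety, including getting the powers of $\cL$ and the denominators $(1-\cL^n)^{-1}$ exactly right so that the sum over all $\al$ collapses into the stated plethystic exponential. The fibration over conjugacy classes of $Y$ is not equidimensional, so one must stratify carefully and check that the strata reassemble into the clean closed form; this is where the homogeneity of $W_J$ (forcing a $\C^*$-action and hence strong constraints on the virtual motives) does most of the work, exactly as in the proof of Theorem~\ref{thm_quantumC3}, but the presence of the $Y^2$ term (rather than the $qYX$ term) changes the linear algebra and must be redone from scratch.
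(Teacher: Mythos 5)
Your strategy coincides with the paper's up to the crucial step: apply the cut $I=\{z\}$ and use~\eqref{eq:dimred} to reduce to the Jordan-plane representation varieties $R_J(n)=\{(A,B)\in\End(V)^2:[A,B]=B^2\}$, then evaluate $\sum_n[R_J(n)]/[\GL(n)]\,t^n$. The gap is exactly where you locate it, at the computation of $[R_J(n)]$: you anticipate a delicate stratification by partitions (``Jordan-plane blocks'') that ``must be redone from scratch'' because of the $B^2$ term. In fact no new linear algebra is needed, and this is the one idea your proposal is missing. Once one knows $B$ is nilpotent (as in \autoref{lem_jordan}, or \cite[Lemma 2.1]{I}), the projection $(A,B)\mapsto B$ sends $R_J(n)$ to the nilpotent cone, and over each nilpotent orbit the fibre $\{A:\mathrm{ad}_B(A)=B^2\}$ is a \emph{torsor} under $\ker(\mathrm{ad}_B)$, the centralizer of $B$: it is nonempty because $B^2$ always lies in the image of $\mathrm{ad}_B$ when $B$ is nilpotent. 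Thus the projection is an affine bundle modelled on the centralizer vector bundle, and since an affine bundle and its underlying vector bundle have the same motivic class, $[R_J(n)]$ equals the class of commuting pairs with one factor nilpotent. That class is precisely the $q=1$ input to \autoref{nil} (taken from~\cite{BM}), and the series factors at once as $U^{N,I}_1\cdot U^{N,N}_1=\Exp\bigl(\tfrac{\cL}{\cL-1}\tfrac{t}{1-t}\bigr)$. The obstacle you flagged evaporates once you see that the $B^2$ term only translates the fibre, it does not change its class.

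Two lesser points. The alternative ``integration-map / BPS'' route you sketch presupposes the very structure~\eqref{conj_Q1} that the paper only conjectures from these theorems; knowing there is a unique one-dimensional simple does not by itself pin down the coefficient $\tfrac{\cL}{\cL-1}$ without a motivic computation equivalent to the one above, so as a proof strategy it is circular. And the proposed sanity check $\cL^{1/2}\mapsto -1$ cannot be applied literally to $\tfrac{\cL}{\cL-1}$, which has a pole at $\cL=1$: in the paper's terminology this is a motive \emph{with} denominators, so the Euler-characteristic specialization of the series requires more care than direct substitution.
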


\subsubsection{The homogenized Weyl deformation} 

Consider now the potential 
\[
W_{hW} = xyz-xzy -\frac{1}{3} z^3
\]
on the three-loop quiver. One of the relations of the Jacobian algebra $J_{Q_1, W_W}$
is $[x,y]=z^2$, the homogenization of the Weyl algebra relation $[x,y]=1$.  In \autoref{sect_interpret} below, we will explain the following

\begin{conjecture} We have
\[
U_{Q_1,W_{hW}}(t) = \Exp\rbrac{ \frac{\cL(1-[\mu_3])}{\cL-1}\frac{t}{1-t}},
\]
where by slight abuse of notation we denote by $[\mu_3]$ the equivariant 
motivic class of $\{z^3=1\}\subset\C$ carrying the canonical action of $\mu_3$.
\label{conj_weyl}
\end{conjecture}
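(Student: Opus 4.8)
\emph{Towards a proof.} The strategy would parallel the proofs of Theorems~\ref{thm_quantumC3} and~\ref{thm_jordan}: first read off the ``seed'' of the plethystic exponential from the one-dimensional representations, then show that the invariants in all dimensions are generated from it.

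The seed is accessible. On $R(Q_1,1)=\C^3$ the trace function of $W_{hW}$ is $f_1(x,y,z)=xyz-xzy-\tfrac13 z^3=-\tfrac13 z^3$, which depends on $z$ alone. Since the Milnor fibre of $z\mapsto z^3$ is the $\mu_3$-set $\mu_3$, a direct Thom--Sebastiani computation gives $[\crit(f_1)]_\vir=\cL^{1/2}\bigl([\mu_3]-1\bigr)$, and dividing by $[\GL_1]_\vir=-\cL^{-1/2}(\cL-1)$ yields $[\gM(J_{Q_1,W_{hW}},1)]_\vir=\tfrac{\cL(1-[\mu_3])}{\cL-1}$, precisely the $t^1$-coefficient predicted by the conjecture.

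For the higher coefficients the difficulty is that $W_{hW}$ is cubic, not linear, in $z$, so the dimensional reduction of~\cite{Mor} does not apply directly. The key observation is that, rewriting $\operatorname{tr}(Z[X,Y])=\operatorname{tr}(X[Y,Z])$, each trace function $f_n(X,Y,Z)=\operatorname{tr}(X[Y,Z])-\tfrac13\operatorname{tr}(Z^3)$ is affine-linear in the matrix entries of $X$, with linear part $\operatorname{tr}(X[Y,Z])$ and $X$-independent part $h(Y,Z)=-\tfrac13\operatorname{tr}(Z^3)$. One would prove a version of the dimensional-reduction formula that allows such a constant term, expressing $[\varphi_{f_n}]$, up to an explicit power of $\cL$, through the vanishing cycle of $-\tfrac13\operatorname{tr}(Z^3)$ restricted to the commuting variety $\{[Y,Z]=0\}\subset\End(\C^n)^2$. (For the potentials of Theorems~\ref{thm_quantumC3}--\ref{thm_jordan} the analogous procedure, iterated once or twice, terminates at a twisted commuting variety carrying the \emph{trivial} potential, which is why those answers involve no nontrivial monodromy; here the residual $\operatorname{tr}(Z^3)$ is irreducible, and is the source of the class $[\mu_3]$.) Since $-\tfrac13\operatorname{tr}(Z^3)$ is pulled back from the $Z$-factor, what remains is the motivic vanishing cycle of $\operatorname{tr}(Z^3)$ on $\End(\C^n)$; a trace argument on the generalized eigenspaces of $Z$ shows $Z$ is nilpotent on the critical locus, so one is really studying $\operatorname{tr}(Z^3)$ transverse to the nilpotent cone, and, as it is the pullback of the power sum $p_3$ along the adjoint quotient $\End(\C^n)\to\C^n$, this should be computable by a one-variable Thom--Sebastiani argument fibred over the quotient, once more producing copies of $[\mu_3]$. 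Assembling the resulting sum over dimension vectors and Jordan types of $Z$ would then give $U_{Q_1,W_{hW}}$ in closed form.

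The hard part is exactly this last step: controlling the vanishing cycle of the cubic potential $\operatorname{tr}(Z^3)$ over the \emph{singular} commuting variety, and proving that after summing over all dimension vectors and dividing by $[\GL_n]_\vir$ everything collapses into the single plethystic factor $\bigl(\tfrac{\cL(1-[\mu_3])}{\cL-1}\bigr)\tfrac{t}{1-t}$, with no further $t^r$-type corrections. The expected absence of such corrections should reflect the fact that, unlike quantum affine three-space at a root of unity, the homogenized Weyl algebra is never a finite module over its centre (already $J_{Q_1,W_{hW}}/(z)=\C[x,y]$ is infinite-dimensional), so there is no resonant stratum to contribute. Fitting $W_{hW}$ cleanly into the framework behind the general formula~\eqref{conj_Q1} is what would make this rigorous, and is why we record the statement only as a conjecture.
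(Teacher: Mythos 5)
The statement is a conjecture, and the paper does not prove it either; it only \emph{motivates} it.  The paper's route is (i) prove \autoref{lem_weyl} (via the identity $z^{n+2}=[z^n x,y]$, so $z$ is forced to act nilpotently, hence $z=0$ on any simple module, reducing to $\C[x,z]$) to conclude that every finite-dimensional simple module has dimension one; (ii) compute the virtual motive of the moduli of one-dimensional representations exactly as you did, obtaining $\cL^{\oh}([\mu_3]-1)$ from the cubic $z^3$; and then (iii) plug the single term $m_1=1$ into the \emph{already-conjectural} master formula~\eqref{conj_Q1}.  Your seed calculation agrees with (ii).  Your proposed second step, a modified dimensional reduction allowing an affine-linear dependence on $x$ with a nonzero constant part $-\tfrac13\operatorname{tr}(Z^3)$, is genuinely different from anything in the paper and, if it could be carried out, would give an actual proof rather than a heuristic; the authors explicitly do not attempt it because $W_{hW}$ is not homogeneous of degree $1$ in any arrow, so no cut exists in the sense used in \autoref{sec_proofs}. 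That is an interesting direction, but as you say yourself, the key step --- controlling $[\varphi_{\operatorname{tr}(Z^3)}]$ over the singular commuting variety and showing the sum collapses --- is precisely what is missing, so the proposal does not establish the conjecture.

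Two smaller points.  First, the reason the paper expects no $t^r$-terms for $r>1$ is \autoref{lem_weyl}, not the PI/finiteness-over-centre heuristic: within the framework of~\eqref{conj_Q1}, the exponents that appear are governed by the dimensions of finite-dimensional simples, and the lemma says these are all equal to one.  Second, your parenthetical justification that $J_{Q_1,W_{hW}}$ is never finite over its centre ``because $J_{Q_1,W_{hW}}/(z)=\C[x,y]$ is infinite-dimensional'' does not prove that claim --- $\C[x,y]$ is commutative and hence finite over its own centre, so the quotient carries no such information.  The correct reason is that $J_{Q_1,W_{hW}}/(z-1)$ is the Weyl algebra $A_1$, which has trivial centre; but even granting this, ``not a finite module over its centre'' does not by itself rule out higher-dimensional simples, whereas the commutator argument in \autoref{lem_weyl} does.
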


\subsubsection{Sklyanin deformations}

Consider finally the family of Sklyanin deformations
\[ W_{a,b,c} = a\, xyz + b\, xzy + \frac{c}{3}(x^3 + y^3 + z^3),
\]
for $[a:b:c]\in\P^2$.  We assume that $abc\neq 0$ and $(3abc)^3 \neq (a^3+b^3+c^3)^3$.  The relations in the 
Jacobian algebra $J_{Q_1,W_{a,b,c}}$ are given by
\[
a xy + b yx + c  z^2 = a yz + b zy + c x^2 = a zx + b  xz + c  y^2 = 0.
\]
As explained in \cite[p.~38]{ATVdB}, to each such algebra is associated a pair of smooth cubic curves in $\cP^2$. 
 The first is the \emph{point scheme} $E_{pt}$, which parametrizes isomorphism classes of graded modules with Hilbert series $(1-t)^{-1}$; it is given explicitly by 
\[
E_\pt=\left\{(a^3+b^3+c^3)XYZ-abc(X^3+Y^3+Z^3) = 0\right\} \subset \cP^2,
\]
using homogeneous coordinates $[X:Y:Z]$ on $\cP^2$. The functor which shifts the grading of a module induces a translation $\sigma : E_\pt \to E_\pt$ in the group law of the cubic curve.
The second cubic curve, which typically has a different $j$-invariant, is the curve $E_\DT$ defined by the vanishing of the potential $f : \C^3\to \C$ for one-dimensional representations:
\[
E_\DT =\left\{ (a+b)XYZ + \tfrac{c}{3}(X^3+Y^3+Z^3) = 0 \right\} \subset \P^2.
\]
Two Sklyanin algebras determine the same pair $(E_\pt,E_\DT)$ if and only if they are either isomorphic or opposite as graded algebras.  In \autoref{sect_interpret}, we explain the following conjecture. 

\begin{conjecture}\label{conj_Skly} Let $J_{Q_1,W_{a,b,c}}$ be a Sklyanin algebra as above, let $(E_\pt,E_\DT)$ be the associated elliptic curves, and let $\sigma : E_\pt \to E_\pt$ be the induced automorphism. Let 
$S_\DT$ be the affine cubic surface
\[ S_\DT= \left\{ (a+b)xyz + \tfrac{c}{3}(x^3+y^3+z^3) = 1 \right\} \subset \A^3,\]
the universal cover of $\P^2\setminus E_\DT$, carrying the canonical action of $\mu_3 \cong \pi_1(\P^2\setminus E_\DT)$. Define the virtual motive
\[ M_1 = \irL[3] \rbrac{[S_\DT,\mu_3] - [E_\DT](\cL-1) - 1}.
\]
Then we have the following conjectural formulae for the universal series:
\begin{enumerate}
\item If $|\sigma| = \infty$, then
\[
U_{Q_1,W_{a,b,c}}(t) = \Exp\rbrac{-\frac{M_1}{\cL^{\frac{1}{2}}-\mathbb{L}^{-\frac{1}{2}}} \frac{t}{1-t}}.
\]
\item If $|\sigma|$ is finite but not a multiple of three, then
\[
U_{Q_1,W_{a,b,c}}(t) = \Exp\rbrac{ -\frac{M_1}{\stddenom} \frac{t}{1-t}
- \frac{M_\sigma}{\stddenom} \frac{t^{|\sigma|}}{1-t^{|\sigma|}}},
\]
where the virtual motive $M_\sigma = \rL([\P^2] - [E_\pt/\sigma])$ involves the elliptic curve $E_\pt/\sigma$ 
isogeneous to $E_\pt$. 
\item If $|\sigma|$ is a multiple of three, then 
\[
U_{Q_1,W_{a,b,c}}(t) = \Exp \rbrac{ -\frac{M_1}{\stddenom} \frac{t}{1-t} - \sum_{r} \frac{M_r}{\stddenom}{\frac{t^r}{1-t^r} } }.
\]
where the summation index $r$ ranges over a subset of  $\{|\sigma|/3 ,\ldots,|\sigma|\}$ and $M_r$ are virtual motives without denominators.
\end{enumerate}
\end{conjecture}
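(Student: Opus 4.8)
The plan is to isolate the one part of the statement that can be established unconditionally — the coefficient of $t$ — and then to bootstrap the remaining coefficients from the general three-loop formula~\eqref{conj_Q1} together with the representation theory of the Sklyanin algebra. For $\al=1$ the Jacobi relations say precisely that the three partial derivatives of $f=(a+b)xyz+\tfrac{c}{3}(x^3+y^3+z^3)$ vanish, so the $t$-coefficient of $U_{Q_1,W_{a,b,c}}$ equals $[\crit(f)]_\vir/[\GL_1]_\vir$. Since $f$ is homogeneous of degree three, its motivic nearby fibre is $[\psi_f]=[S_\DT,\mu_3]$ with the canonical monodromy action, while $f^{-1}(0)$ is the affine cone over the smooth plane cubic $E_\DT$, so $[f^{-1}(0)]=1+(\cL-1)[E_\DT]$. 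Feeding $[\varphi_f]=[S_\DT,\mu_3]-1-(\cL-1)[E_\DT]$ into $[\crit(f)]_\vir=-(-\cL^{\frac{1}{2}})^{-3}[\varphi_f]$ yields exactly $M_1$; dividing by $[\GL_1]_\vir=-\stddenom$ gives $-M_1/\stddenom$, matching the predicted $t$-coefficient in all three cases.

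Granting~\eqref{conj_Q1} — which expresses $U_{Q_1,W}$, for any homogeneous cubic $W$, as a plethystic exponential governed by the finite-dimensional simple $J_{Q_1,W}$-modules, with a simple of dimension $m$ contributing $\bigl([\text{its moduli}]_\vir/[\GL_m]_\vir\bigr)\cdot t^m/(1-t^m)$ — the problem reduces to representation theory: one must list the finite-dimensional simple modules of the Sklyanin algebra $J_{Q_1,W_{a,b,c}}$ and compute the virtual motives of the moduli spaces they sweep out, using the description of these algebras via the pair $(E_\pt,\sigma)$ in~\cite{ATVdB,ATVdB2} and of their modules in~\cite{DeL,DeLLeB,W}. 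The factor $t^m/(1-t^m)$ is just the statement that such a simple persists, with the same virtual motive, in every multiple of $m$, exactly as in Theorems~\ref{thm_quantumC3} and~\ref{thm_jordan}.

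The crux is to determine which $m$ occur, and here $\sigma$ enters decisively. When $|\sigma|=\infty$ the Sklyanin algebra has no fat point modules (it remains infinite over its centre), so the degree-one locus is the only source of simples and the exponent is $-\frac{M_1}{\stddenom}\frac{t}{1-t}$, the direct analogue of the $q$-not-a-root-of-unity case of Theorem~\ref{thm_quantumC3}. When $|\sigma|$ is finite the algebra is a finite module over its centre and acquires fat point modules, whose multiplicities are governed by $|\sigma|$: if $3\nmid|\sigma|$ they form a single family of dimension $|\sigma|$, parametrised (scheme-theoretically) by a space built from the isogenous curve $E_\pt/\sigma$, whose contribution is $-M_\sigma/\stddenom$ with $M_\sigma=\rL([\P^2]-[E_\pt/\sigma])$, giving case (2); and if $3\mid|\sigma|$ the extra $\mu_3$-symmetry of the Sklyanin cubic forces fat points of several multiplicities within $\{|\sigma|/3,\ldots,|\sigma|\}$, each parametrised by an affine-space bundle over a curve isogenous to $E_\pt$, producing the denominator-free motives $M_r$ and the finite sum of case (3).

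The main obstacle — and the reason this remains a conjecture — is twofold. First, \eqref{conj_Q1} is itself unproved: with $W_{a,b,c}$ linear in no generator there is no purchase on the vanishing cycles $[\varphi_{f_\al}]$ for $\al\ge 2$, and establishing it would require genuinely new input, for instance a $\GG_\al$-equivariant resolution of $\crit(f_\al)$, or a cohomological Hall algebra computation adapted to the elliptic geometry of $E_\pt$ and $E_\DT$. Second, even granting~\eqref{conj_Q1}, pinning down $M_\sigma$ and the $M_r$ requires controlling the scheme structure of $\crit(f_\al)/\GG_\al$ along the fat-point strata, which is delicate. Pending such arguments, the formulae rest on the pattern of~\eqref{conj_Q1}, on the unconditional degree-one calculation above, and on the agreement of their $\chi$-specialisations with the expected numerical BPS counts.
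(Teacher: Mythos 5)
Your proposal matches the paper's own treatment: Conjecture~\ref{conj_Skly} is justified there exactly as you describe, by rigorously computing the degree-one coefficient (Lemma~\ref{lem_Skly1d}) and then extrapolating from the pattern~\eqref{conj_Q1} using the known representation theory of Sklyanin algebras (higher-dimensional simples exist iff $|\sigma|<\infty$, dimension bounds from~\cite{W} split according to divisibility of $|\sigma|$ by three, and when $3\nmid|\sigma|$ the $|\sigma|$-dimensional simples with $g$ invertible sweep out a $\C^*$-bundle over $\P^2\setminus(E_\pt/\sigma)$). Your shortcut identifying $[\psi_f]=[S_\DT,\mu_3]$ directly by homogeneity replaces the paper's blowup-plus-Denef--Loeser route in Lemma~\ref{lem_Skly1d} but yields the same $M_1$.
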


\subsection{A deformation of the conifold algebra}

Let $Q_\con$ be the conifold quiver, with $V(Q_\con)=\{v_1, v_2\}$, and four arrows 
$E(Q_\con)=\{a_{1}, a_2,b_1,b_2 \}$ with head and tail as indicated in \autoref{fig_Q2}.

\begin{figure}[h]
\begin{center}
\begin{tikzpicture}[>=stealth',auto,node distance=2cm,
  thick,main node/.style={circle,draw,inner sep=2pt,minimum size=0.2cm}]
\node[main node] (1) {$v_1$};
\node[main node] (2) [right of=1] {$v_2$};
\draw [->] (1) .. controls (0.2,1) and (1.8,1) .. (2);
\draw [->] (1) .. controls (0.8,0.3) and (1.2,0.3) .. (2);
\draw [->] (2) .. controls (1.8,-1) and (0.2,-1) .. (1);
\draw [->] (2) .. controls (1.2,-0.3) and (0.8,-0.3) .. (1);
\draw (1,0.45) node {$a_1$};
\draw (1,-0.5) node {$b_1$};
\draw (1,1.05) node {$a_2$};
\draw (1,-1.08) node {$b_2$};
\end{tikzpicture}
\caption{The quiver $Q_\con$.}\label{fig_Q2}
\end{center}
\end{figure}
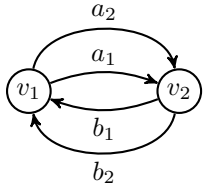
  
The standard quartic potential of the conifold quiver
\[
W=a_1b_1a_2b_2 - a_1b_2a_2b_1
\]
gives a Calabi--Yau algebra $J_{Q_\con,W}$ whose centre is given by
\[Z = \C[x,y,z,t]/(xt-yz).
\]
In this way, $J_{Q_\con,W}$ is a noncommutative crepant resolution of the conifold singularity $\Spec Z$ in the sense of~\cite{VdB}.  In particular, $J_{Q_\con,W}$ is derived equivalent to a standard (commutative) crepant resolution of $Z$.

We may therefore think of the one-parameter deformation
\[
W_q=a_1b_1a_2b_2 - q a_1b_2a_2b_1
\]
with $q\in\C^*$ as a quantization of the resolved conifold.  The condition $q\neq 0$ once again corresponding to the Calabi--Yau property for the Jacobian algebra. 

\begin{theorem} If $q\in\C^*$ is not a root of unity, then
\[ U_{Q_\con, W_q}(t_0, t_1)=\Exp\left(\frac{3\cL^{\frac{1}{2}}-\cL^{-\frac{1}{2}}}{\cL^{\frac{1}{2}}-\cL^{-\frac{1}{2}}}\frac{t_{0}t_{1}}{1-t_{0}t_{1}}-\frac{1}{\cL^{\frac{1}{2}}-\cL^{-\frac{1}{2}}}\frac{t_{0}+t_{1}}{1-t_{0}t_{1}}\right).
\]
If $q$ is a primitive $r$-th root of unity, then the above expression
is multiplied by a further factor
\[ \Exp\left((\cL-1)\frac{t_{0}^rt_{1}^{r}}{1-t_{0}^rt_{1}^{r}}\right).
\]
\label{thm_qconifold} 
\end{theorem}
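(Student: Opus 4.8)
The plan is to use dimensional reduction, exactly as in the proof of Theorem~\ref{thm_quantumC3} and in the treatment of the undeformed conifold in \cite{MMNS}. Every monomial of $W_q=a_1b_1a_2b_2-q\,a_1b_2a_2b_1$ contains each arrow exactly once, so $W_q$ is linear in $b_2$, with $\partial_{b_2}W_q=a_1b_1a_2-q\,a_2b_1a_1$. Fix a dimension vector $\al=(n_0,n_1)$, let $V_1,V_2$ be the vector spaces at the two vertices, and use $(a_1,a_2,b_1,b_2)$ as coordinates on $R(Q_\con,\al)$; then $f_\al=\operatorname{tr}\bigl(b_2(a_1b_1a_2-q\,a_2b_1a_1)\bigr)$ is linear along the summand $\Hom(V_2,V_1)$. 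First I would invoke the dimensional-reduction theorem for motivic vanishing cycles (used in \cite{BBS}, systematized in \cite{Mor}): on the zero locus of $\partial_{b_2}W_q$ the function vanishes identically along that summand, and elsewhere it restricts to a nonzero linear form, so $[\varphi_{f_\al}]=-\cL^{n_0n_1}[Z_\al]$, where $Z_\al=\{(a_1,a_2,b_1):a_1b_1a_2=q\,a_2b_1a_1\}\subseteq\Hom(V_1,V_2)^{\oplus2}\oplus\Hom(V_2,V_1)$. Unwinding the definitions from \autoref{sec_prelim} then gives
\[
[\gM(J_{Q_\con,W_q},\al)]_\vir=(-\cL^\oh)^{(n_0-n_1)^2}\,\frac{[Z_\al]}{[\GL_{n_0}\times\GL_{n_1}]},
\]
so the theorem reduces to evaluating the series $\sum_\al (-\cL^\oh)^{(n_0-n_1)^2}[Z_\al]\,t^\al/[\GL_{n_0}\times\GL_{n_1}]$ in closed form.

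The heart of the matter is then the computation of $[Z_\al]$ for all $\al$, following \cite{MMNS} for $q=1$ and carrying the parameter $q$ through. Two structural features are available. First, the defining relation is itself linear in $b_1$, which exhibits $Z_\al$ as a linear fibration over the variety of $2$-Kronecker modules $(a_1,a_2)\in\Hom(V_1,V_2)^{\oplus2}$; so $[Z_\al]$ can be computed by stratifying the latter by isomorphism type (preprojective, preinjective, and regular summands supported over points of $\P^1$, in Kronecker's classification) and determining, on each stratum, the dimension of the kernel of the ``$q$-Sylvester operator'' $b_1\mapsto a_1b_1a_2-q\,a_2b_1a_1$. Second, composing the relation with $b_1$ shows that the pairs $(a_1b_1,a_2b_1)$ and $(b_1a_1,b_1a_2)$ of endomorphisms $q$-commute, which is precisely the mechanism behind Theorem~\ref{thm_quantumC3}: for $q$ not a root of unity a $q$-commuting pair is rigidly constrained, the regular Kronecker strata then contribute only the two series $\tfrac{t_0t_1}{1-t_0t_1}$ and $\tfrac{t_0+t_1}{1-t_0t_1}$, giving the first displayed factor; whereas at a primitive $r$-th root of unity new solutions built from $r\times r$ clock-and-shift blocks appear on the regular strata -- equivalently, $J_{Q_\con,W_q}$ becomes a finite module over an enlarged centre, acquiring elements such as $(a_ib_j)^r$ -- and these contribute, just as the analogous family does for quantum affine three-space, the extra factor $\Exp\bigl((\cL-1)\tfrac{t_0^rt_1^r}{1-t_0^rt_1^r}\bigr)$. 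As a check on the normalization, the boxed formula already yields $[\gM(J_{Q_\con,W_q},(1,0))]_\vir=-(\cL^\oh-\cL^{-\oh})^{-1}$ and, for $q\neq1$, $[\gM(J_{Q_\con,W_q},(1,1))]_\vir=(3\cL^2-3\cL+1)/(\cL-1)^2$, which match the coefficients of $t_0$ and $t_0t_1$ in the claimed series for $U_{Q_\con,W_q}$.

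The last step is the plethystic assembly: using the pre-$\lambda$-ring formalism of \autoref{sec_prelim} one writes $U_{Q_\con,W_q}$ as $\Exp$ of the resulting sum, collects the geometric series in $t_0$ and $t_1$, and reads off the two closed forms. The main obstacle is the middle step: evaluating $[Z_\al]$ for every $\al$, i.e.\ the Kronecker-stratification bookkeeping together with the exact determination of the $q$-Sylvester kernel dimensions, and in particular the proof that at a primitive $r$-th root of unity the only genuinely new contributions sit at the balanced dimension vectors $(kr,kr)$ and are controlled by the single class $\cL-1$, with no further jumps. This is the two-vertex counterpart of the $q$-commuting-variety analysis underlying Theorem~\ref{thm_quantumC3}, and is where essentially all of the work lies; the dimensional reduction of the first step and the plethystic bookkeeping of the last step are comparatively formal, modulo the routine but delicate tracking of the $(-\cL^\oh)$-normalizations and the $[\GL_{n_0}\times\GL_{n_1}]^{-1}$ denominators.
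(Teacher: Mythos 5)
Your overall template --- dimensional reduction via a cut, reduce to a matrix variety, carry the parameter $q$ through a known stratification --- is the right one, and you correctly spot the crucial algebraic fact that the cut relation makes the pairs $(a_1b_1,a_2b_1)$ and $(b_1a_1,b_1a_2)$ $q$-commute, which is indeed the engine that reproduces \autoref{thm_quantumC3}. But the route you then propose for ``the heart of the matter'' is genuinely different from the paper's. You want to stratify the space of Kronecker pairs $(a_1,a_2)$ by isomorphism type and compute the kernel of the $q$-Sylvester operator $b_1\mapsto a_1b_1a_2-q\,a_2b_1a_1$ stratum by stratum; you yourself flag this as ``where essentially all of the work lies,'' and it is a substantial bookkeeping exercise even at $q=1$. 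The paper instead avoids the Kronecker classification entirely: it cuts along $a_1$, views the remaining pair $a_2\oplus b_2$ as an endomorphism of $V_0\oplus V_1$, and uses the $q\neq 0$ hypothesis (exactly as in \autoref{nilinvlem}) to split $V$ canonically into summands on which $a_2\oplus b_2$ acts nilpotently and invertibly. This factors the universal series as $U^N\cdot U^I$; one then shows $U^N$ is $q$-independent (so it can be read off from the $q=1$ answer of \cite{MMNS}) and that $U^I$ reduces directly to the $q$-commuting-matrix computation of \autoref{sec_qC3_proof} --- precisely the $q$-commuting observation you mention, but promoted to the structural backbone rather than a side remark inside a stratum-by-stratum analysis. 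So your plan is plausible but front-loads the hard combinatorics into the regular Kronecker strata, where the paper's nilpotent/invertible splitting makes that analysis unnecessary; your low-degree normalization checks on $t_0$ and $t_0t_1$ are a useful sanity test either way. Note also that the paper's own write-up here is only a sketch (``We refer the reader to \cite{C} for full details''), so neither your proposal nor the text at hand amounts to a complete proof; the difference is in which reduction one chooses to do the remaining work.
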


More general deformations of the conifold potential are studied in~\cite{C,OU}.  We leave the investigation of motivic DT invariants for these deformations for future work. 

\subsection{A deformation of the cyclic quiver}

Finally let $n\geq 1$ and consider the quiver $Q_{n+1}$ with $V(Q_{n+1})=\{v_0, \ldots,v_n\}$ and 
arrows depicted as in \autoref{fig_Q3}. 

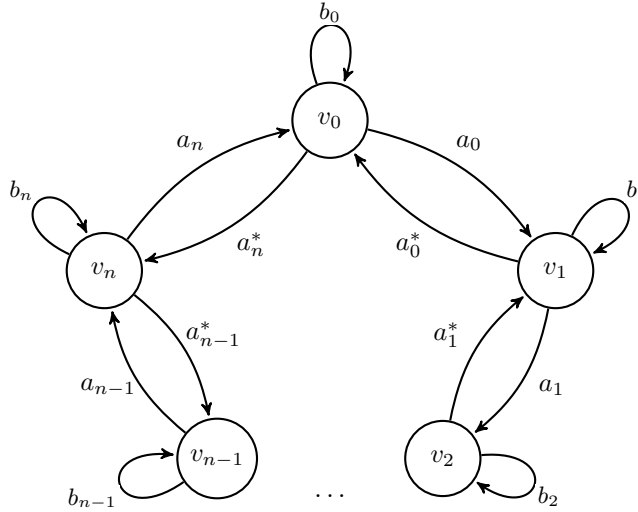
\begin{figure}[h]
 \begin{center}
\begin{tikzpicture}[->,>=stealth',shorten >=1pt,auto,node distance=3cm,
  thick,main node/.style={circle,draw,minimum size=10mm}]

  \node[main node] (0) at (0,0) {$v_{0}$};
  \node[main node] (1) at (3,-2) {$v_{1}$};
  \node[main node] (2) at (1.5,-4.5) {$v_{2}$};
  \node[main node] (n-1) at (-1.5,-4.5) {$v_{n-1}$};
  \node[main node] (n) at (-3,-2) {$v_{n}$};
  \node (a0) at (1.85,-0.3){$a_{0}$};
  \node (a0*) at (1.05,-1.65){$a_{0}^{\ast}$};
  \node (an) at (-1.85,-0.3){$a_{n}$};
  \node (an*) at (-1.05,-1.65){$a_{n}^{\ast}$};
  \node (an-1*) at (-1.55,-2.85){$a_{n-1}^{\ast}$};
  \node (a1*) at (+1.55,-2.85){$a_{1}^{\ast}$};
  \node (an-1) at (-2.95,-3.55){$a_{n-1}$};
  \node (a1) at (2.95,-3.55){$a_{1}$};
  \node (dot) at (0,-5){$\ldots$};
  \path[every node/.style={font=\sffamily\small,
  		fill=white,inner sep=1pt}]
    (0) edge [in=70, out=110, loop] node {$b_{0}$}(0)
        edge [bend left=20] node[right=1mm] {} (1)        
        edge [bend left=20] node[right=1mm] {} (n)
    (1) edge [in=115-90, out=155-90,loop] node {$b_{1}$}(1)
        edge [bend left=20] node[right=1mm] {} (2)
        edge [bend left=20] node[right=1mm] {} (0)
    (2) edge [in=115-150, out=155-150, loop] node {$b_{2}$}(2)
        edge [bend left=20] node[right=1mm] {} (1)

    (n-1) edge [in=115+60, out=155+60, loop] node {$b_{n-1}$}(n-1)
        edge [bend left=20] node[right=1mm] {} (n)
    
    (n) edge [in=115, out=155, loop] node {$b_{n}$}(n)
        edge [bend left=20] node[right=1mm] {} (0)
        edge [bend left=20] node[right=1mm] {} (n-1);
      
    \end{tikzpicture}
    \caption{The $A_{n}$-quiver $Q_{n+1}$.}\label{fig_Q3}
    \end{center}
    \end{figure}

As is well known, $Q_{n+1}$ is the McKay quiver for the embedding $\mu_{n+1}<\SL(3,\mathbb C)$ 
with weights $(1,-1,0)$. We are interested in deformations of the potential
\begin{equation}\label{AnSup}
W_1=\sum_{i=0}^{n}\left(b_{i+1}a_{i}^{\ast}a_{i}-b_{i}a_{i}a_{i}^{\ast}\right),
\end{equation}
where the index labels are to be understood modulo $n+1$.  Once again, the Jacobi algebra  $J_{Q_{n+1},W_1}$ is a non-commutative crepant resolution of its center
\[ Z \cong {\mathbb C}[x,y,z,t]/(xy-z^{n+1}).\]
The threefold $\Spec Z$ has transverse $A_n$ singularities along a line. 
 
We concentrate on the family of homogeneous  deformations of the potential $W_1$ given by 
\begin{equation}\label{def-McKay-pot}
 W_{\underline{q}}=\sum_{i=0}^{n-1}\left(q_{i}'b_{i+1}a_{i}^{\ast}a_{i}-q_{i}b_{i}a_{i}a_{i}^{\ast}\right),
\end{equation} 
where we assume $\prod_{j}q_{j}'q_{j}\neq 0$.  It is straightforward to verify that any such potential is equivalent (by rescaling variables) to a member of the one-parameter family
\[
 W_{q}=\sum_{i=0}^{n-1}\left(b_{i+1}a_{i}^{\ast}a_{i}-qb_{i}a_{i}a_{i}^{\ast}\right),
\]
parametrized by $q \in \C^*$.  The condition $q \ne 0$ is still equivalent to the Jacobian algebra being Calabi--Yau.

Let $t=(t_0, \ldots, t_n)$ be the variables in the universal DT series; we continue to use multi-index notation for monomials in these variables. Let $\delta_{i}$ be dimension vector which takes the value $1$ on $i$th vertex and $0$ otherwise. Define
\[
 \Delta=\{\delta_{i}+\ldots+\delta_{i+k}\colon i\in\{0, \ldots, n\}, k\in\{0, \ldots, n-1\}\},
\]
where we read the indices modulo $n+1$.  Thus, for example, the vector $\delta_{n}+\delta_{0}$ lies in $\Delta$. Finally, we let $\delta=\sum_{i=0}^n \delta_i$. 

\begin{theorem}\label{thm:cyclicAn}  If $q\in\C^*$ is not a root of unity, then
\[ U_{Q_{n+1}, W_q}(t)=
 \Exp\left(\frac{(n+1)\cL^{\frac{1}{2}}-\cL^{-\frac{1}{2}}}{\cL^{\frac{1}{2}}-\cL^{-\frac{1}{2}}}\frac{t^{\delta}}{1-t^{\delta}}+\sum_{\alpha\in\Delta}\frac{\cL^\half}{\cL^{\frac{1}{2}}-\cL^{-\frac{1}{2}}}\frac{t^{\alpha}}{1-t^{\delta}}\right).
\]
If $q$ is a primitive $r$-th root of unity, then the above expression
is multiplied by a further factor
\[ \Exp\left((\cL-1)\frac{t^{r\delta}}{1-t^{r\delta}}\right).
\]
\end{theorem}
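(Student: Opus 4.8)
The plan is to exploit that $W_q$ is linear in the loop variables $b_0,\dots,b_n$ and to run the dimensional reduction of~\cite{Mor}, exactly as for $\C^3$ in~\cite{BBS} and for the conifold in~\cite{MMNS} (and presumably as in the proofs of Theorems~\ref{thm_quantumC3} and~\ref{thm_qconifold}). Fix a dimension vector $\alpha=(\alpha_0,\dots,\alpha_n)$ and write $A_i,A_i^\ast,B_i$ for the matrices of the arrows. Since each monomial of $W_q$ contains exactly one $b_i$, the trace function $f_\alpha$ is linear in $(B_0,\dots,B_n)$, with the $B_i$-coefficient the matrix-valued word $\partial_{b_i}W_q$. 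Dimensional reduction then expresses $[\gM(J_{Q_{n+1},W_q},\alpha)]_\vir$, up to an explicit sign and an explicit power of $\cL^{1/2}$ depending only on $\alpha$ through the Euler--Ringel form, as the normalised \emph{ordinary} motivic class $[\sZ_{q,\alpha}]/[\GG_\alpha]$, where $\sZ_{q,\alpha}\subseteq\bigoplus_i\bigl(\Hom(V_i,V_{i+1})\oplus\Hom(V_{i+1},V_i)\bigr)$ is the subvariety cut out by the equations $\partial_{b_i}W_q=0$. Writing $u_i=A_i^\ast A_i$ and $w_i=A_iA_i^\ast$, these are the $q$-twisted preprojective relations $w_{i-1}=q\,u_i$ around the cycle, so $\sZ_{q,\alpha}$ is the representation variety of a $q$-deformation $\Pi_q$ of the preprojective algebra of the affine $\widetilde{A}_n$-quiver underlying $Q_{n+1}$. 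It then remains to evaluate $\sum_\alpha\cL^{c(\alpha)}\,[\sZ_{q,\alpha}]/[\GG_\alpha]\,t^\alpha$ for the explicit half-integer exponent $c(\alpha)$.

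The heart of the matter is the motivic computation for $\sZ_{q,\alpha}$. I would stratify this variety by the supports and Jordan data of the indecomposable constituents of a representation, the relevant building blocks being the uniserial ``arc'' modules supported on $\delta_i+\cdots+\delta_{i+k}\in\Delta$ together with the one-dimensional-per-vertex ``band'' family in direction $\delta=\delta_0+\cdots+\delta_n$, which for $q$ not a root of unity carries a single modulus in $\C^\ast$. For such $q$ the twist $w_{i-1}=q\,u_i$ is rigidifying: iterating it around the cycle pins down the possible nonzero spectra of the $u_i$, a strong constraint, so each stratum fibres, with affine-space fibres, over a product of (quiver) Grassmannians and $\GG_\alpha$-orbit closures, whose motives can be written down. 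Summing over strata, reinserting the prefactors $\cL^{c(\alpha)}$, and invoking the standard fact that such DT series are automatically plethystic exponentials (\cite{BBS,MMNS}), one should arrive at the stated formula: the arcs in $\Delta$ produce the terms $\frac{\cL^{1/2}}{\stddenom}\frac{t^\alpha}{1-t^\delta}$, the $\delta$-direction produces the term with numerator $(n+1)\cL^{1/2}-\cL^{-1/2}$, and the common denominators $1-t^\delta$ record that $\delta$ is the minimal imaginary root of $\widetilde{A}_n$ and may be added freely to any configuration.

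When $q$ is a primitive $r$-th root of unity the algebra $\Pi_q$ (equivalently $J_{Q_{n+1},W_q}$) acquires a large centre: the $q$-twisted relations now admit ``large'' irreducible solutions --- the analogues of the extra simple modules responsible for the additional factors in Theorems~\ref{thm_quantumC3} and~\ref{thm_qconifold} --- supported at the dimension vectors $r\delta,2r\delta,\dots$ and forming $(\cL-1)$-families. Feeding these extra strata into the sum multiplies the universal series by $\Exp\!\bigl((\cL-1)\tfrac{t^{r\delta}}{1-t^{r\delta}}\bigr)$, as asserted.

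The principal obstacle is the middle step. In contrast to the single-vertex examples, one must control the geometry of $\sZ_{q,\alpha}$ uniformly in the $(n+1)$-dimensional lattice of $\alpha$'s: enumerating precisely which arc dimension vectors occur and with what multiplicity, separating the ``diagonal'' contribution in direction $\delta$ from the arc contributions, and --- most delicately --- bookkeeping the half-integer powers of $\cL$ carefully enough to land on exactly the rational functions in the statement, in particular the numerator $(n+1)\cL^{1/2}-\cL^{-1/2}$. Equivalently, one must show that all ``higher'' plethystic contributions cancel, leaving only the arc terms, the single diagonal term, and (at roots of unity) the single correction factor --- and this cancellation is where the real work lies.
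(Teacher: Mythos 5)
Your strategy diverges from the paper's. The paper states that \autoref{thm:cyclicAn} is proved ``analogously to the conifold case, using dimensional reduction and appropriate splittings, reducing the calculation to the case $q=1$ already done in~\cite{BM, Mor},'' with details deferred to~\cite{C}. Concretely, the conifold template is: cut along a set of arrows so that a residual $q$-commutation relation remains; decompose each representation into a direct sum of an ``invertible'' and a ``nilpotent'' piece with respect to a chosen composite endomorphism (the analogue of \autoref{nilinvlem}); show the nilpotent factor of the universal series is $q$-independent by realizing the corresponding representation varieties as Zariski-locally trivial fibrations over nilpotent orbits whose fibres are insensitive to $q$; identify the invertible factor with the $q$-commuting matrix computation of \autoref{sec_qC3_proof}; and finally quote the known $q=1$ formula. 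You instead cut along the loops $b_i$, arrive at the $q$-twisted preprojective relations $w_{i-1}=q\,u_i$, and propose to evaluate the resulting motivic sum directly by stratifying the representation variety according to the indecomposable constituents (arcs and bands) of a module.

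That route is not the paper's, and as written it has a gap which is not cosmetic. First, decomposing a module into indecomposables does not by itself give a geometric stratification of $[\sZ_{q,\alpha}/\GG_\alpha]$ into pieces indexed by multisets of indecomposables, because extensions between constituents are generically non-split; converting a classification of indecomposables into a motivic identity for the stack requires a motivic Hall algebra or Harder--Narasimhan type mechanism that you do not supply (and the ``DT series are automatically plethystic exponentials'' fact you invoke tells you that a plethystic logarithm exists, not what its coefficients are). Second, the step you yourself flag as ``the principal obstacle'' --- determining exactly which arc and band strata occur, computing their motivic classes, bookkeeping the half-integer powers of $\cL$, and verifying the cancellations that leave precisely the numerators $(n+1)\cL^{1/2}-\cL^{-1/2}$ and $\cL^{1/2}$ --- is the entire content of the theorem and is left undone. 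The paper's splitting-and-reduce-to-$q=1$ route avoids exactly this re-derivation: the $q=1$ answer is imported wholesale from~\cite{BM, Mor}, and all $q$-dependence is isolated in a single factor coming from the invertible stratum, which is why the argument closes. As it stands, your write-up is a plausible proof plan, not a proof.
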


\subsection{Interpretations}
\label{sect_interpret}

In this section, we analyze the results stated above. The pattern that will emerge is that, at least in our examples, the universal series has the shape of an Exponentiated rational function in the vertex variables with coefficient motives closely related to motives of spaces of simple representations of the corresponding algebra.

\subsubsection{Interpreting the results on the three-loop quiver}

Let us start by interpreting our results on the quiver $Q_1$ that leads to variants of affine three-space. 
For the potential $W_q=xyz-qxzy$ at $q=1$, we recover the result of~\cite{BBS} for the commutative case:
\[
U_{Q_1,W_1}(t) = \Exp\left(-\frac{-\cL^{3/2}}{\cL^\half-\cL^{-\half}}\frac{t}{1-t}\right).
\]
Here the motive appearing in the numerator, $-\cL^{3/2}$, is the virtual motive of 
affine three-space $\C^3$, the moduli space of simple one-dimensional modules of the algebra $U_{Q_1,W_1}$.
These are clearly the only simple modules. 

For generic quantum three-space, with parameter $q\in\C^*$ not a root of unity, 
by \autoref{thm_quantumC3} above we have
\begin{equation}
U_{Q_1,W_q}(t) = \Exp\left(-\frac{-2\cL^{\oh}+\cL^{-\oh}}{\denom}\frac{t}{1-t}\right).\label{eq_quantumC3}
\end{equation}
The only simple modules are still the one-dimensional modules by
\autoref{lem_quantumC3} below; these are parametrized by the subscheme $\{df=0\}\subset\C^3$
where $f\colon \C^3\to\C$ is given by $f(x,y,z)=xyz$. The motivic vanishing
cycle of $f$ is $2\cL^2-\cL$, so its critical locus has
virtual motive $-2\cL^{\oh}+\cL^{-\oh}$, which is the expression appearing in the 
numerator of~\eqref{eq_quantumC3}.

Let us now turn to the Jordan deformation, with 
\[J_{Q_1,W_J} = \C\langle x, y, z\rangle / ([x,y]-y^2, [y,z], [z,x]-2yz).
\]
Here we have, by \autoref{thm_jordan}, 
\begin{equation}
U_{Q_1,W_J}(t)  = \Exp\left(-\frac{-\cL^{\half}}{\cL^\half-\cL^{-\half}}\frac{t}{1-t}\right).
\label{eq_jordan}
\end{equation}
On the other hand, we still only have one-dimensional simple representations. 
\begin{lemma}\label{lem_jordan}
Every finite-dimensional simple module for $J_{Q_1,W_J}$ has dimension one.
\end{lemma}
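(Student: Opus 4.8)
The plan is to single out the generator $y$ and exploit two facts: the two-sided ideal of $J_{Q_1,W_J}$ generated by $y$ has commutative quotient, while on any module on which $y$ acts invertibly the relation $[x,y]=y^2$ secretly becomes the Weyl relation, which admits no nonzero finite-dimensional representations.

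Concretely, let $M$ be a finite-dimensional simple module and write $X,Y,Z\in\End_\C(M)$ for the operators giving the actions of $x,y,z$; by assumption they satisfy $XY-YX=Y^2$, $YZ=ZY$ and $ZX-XZ=2YZ$. First I would check that $\ker Y\subseteq M$ is a submodule: it is $Y$-stable trivially, it is $X$-stable because $YX=XY-Y^2$ annihilates $\ker Y$, and it is $Z$-stable because $YZ=ZY$. Since $M$ is simple, either $\ker Y=0$ or $\ker Y=M$.

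The case $\ker Y=0$ is where the single idea of the proof is used. Then $Y$ is invertible on $M$, and I would left-multiply and then right-multiply the relation $XY-YX=Y^2$ by $Y^{-1}$ to obtain $Y^{-1}X-XY^{-1}=\Id_M$; taking traces of both sides forces $\dim_\C M=0$, contradicting the fact that a simple module is nonzero. Hence $\ker Y=M$, i.e.\ $Y=0$, so the $M$-action factors through the quotient $J_{Q_1,W_J}/(y)$. Setting $y=0$ in the defining relations leaves only $[z,x]=0$, so this quotient is the commutative polynomial ring $\C[x,z]$. A finite-dimensional simple module over a finitely generated commutative $\C$-algebra is of the form $R/\mathfrak{m}$ with $R/\mathfrak{m}$ a field finite over the algebraically closed field $\C$, hence equal to $\C$; therefore $\dim_\C M=1$.

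I do not expect a genuine obstacle: the only nontrivial observation is that inverting $Y$ converts the Jordan-plane relation into $[\,\cdot\,,\,\cdot\,]=1$, and the remaining manipulations are one-line identities, with the finiteness of $M$ used exactly once (to kill that relation via traces). One could instead try to prove directly that $Y$ acts nilpotently on every finite-dimensional module by feeding the identity $XY^n-Y^nX=nY^{n+1}$ into the trace argument, but this only yields $\operatorname{tr}(Y^k)=0$ for $k\geq 2$ and is awkward outside the simple case, so the submodule argument above is the clean route.
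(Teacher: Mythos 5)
Your proof is correct, and while it shares the key structural observation with the paper's argument — that $\ker Y$ is a $J_{Q_1,W_J}$-submodule, so that simplicity lets you kill $y$ and reduce to the commutative quotient $\C[x,z]$ — the way you rule out $y$ acting invertibly is genuinely different. The paper shows up front that $y^2$ is nilpotent on \emph{any} finite-dimensional module, by noting $y^{n+2}=[y^nx,y]$ is a commutator and hence traceless; this immediately gives $\ker Y \ne 0$. You instead dichotomize on $\ker Y$ and, in the invertible case, conjugate the Jordan relation by $Y^{-1}$ to expose the Weyl relation $Y^{-1}X-XY^{-1}=\Id_M$, which the trace kills. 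Both approaches use a trace argument, but on different identities. The paper's route has the small advantage of proving nilpotence of $y$ on all finite-dimensional modules, a statement it explicitly reuses (the same $z^{n+2}=[z^nx,y]$ trick appears verbatim in the proof of Lemma~\ref{lem_weyl} for the homogenized Weyl algebra, and implicitly in the motivic computation, where $B$ is observed to be nilpotent in $R_J(n)$). Your route is slightly more economical for the simple-module statement alone and invokes a very standard fact (no finite-dimensional Weyl representations), but doesn't directly give nilpotence beyond the simple case — as you yourself note, the direct trace-of-powers argument is the one that generalizes cleanly, and that is in fact what the paper does.
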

\begin{proof}
Let $V$ be a finite-dimensional simple module.  We have the relation
\[
y^{n+2} = y^n[x,y] = [y^nx,y],
\]
which implies that every positive power of $y^2$ is a commutator, and therefore acts with trace zero on $V$.  Hence $y^2$ acts nilpotently on $V$, and so the action of $y$ on $V$ must have a nontrivial kernel $K \subset V$.  But the relations for $J_{Q_1,W_J}$ then imply that $K$ is actually a $J_{Q_1,W_J}$-submodule.  Since $V$ is simple, we must therefore have $K=V$.  Hence $y$ acts trivially on $V$, so that $V$ descends to a representation of the quotient
\[
J_{Q_1,W_J} / (y) \cong \C[x,z],
\]
which is commutative, and therefore only has one-dimensional simple modules.
\end{proof}
One-dimensional  modules are parametrized by the critical locus $\{df_J=0\}\subset\C^3$, where $f_J\colon \C^3\to\C$ is given by $f(x,y,z)=z y^2$. The motivic vanishing cycle of this function is
$\cL^2$, so the corresponding virtual motive is 
$(-\cL)^{-\frac{3}{2}}\cL^2 = -\cL^{\half}$, the numerator of~\eqref{eq_jordan}.

In the cases discussed in this section so far, the Jacobian algebra has only one-dimensional simple 
representations. Quantized affine three-space at roots of unity behaves differently (see \cite{BJL,DeLLeB}):

\begin{lemma} The algebra 
\[J_{Q_1,W_q} = \C\langle x, y, z\rangle / ( [x,y]_q, [y,z]_q, [z, x]_q )\]
corresponding to the potential $W=xyz-qxzy$ on $Q_1$ has simple modules of dimension $r > 1$ if and only if $q$ is a primitive $r$-th root of unity.  Moreover, the space of one-dimensional representations is independent of $q$ provided $q \ne 1$.
\label{lem_quantumC3}\end{lemma}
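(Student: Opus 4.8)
My plan is to work directly from the presentation, in which the relations read $xy = qyx$, $yz = qzy$ and $zx = qxz$. The starting point is the observation that $c = xyz$ is central: a one-line rewriting of the relations gives $xyz = yzx = zxy$ in $J_{Q_1,W_q}$, and an equally short computation shows that $xyz$ commutes with $x$. Applying the evident cyclic symmetry $x\mapsto y\mapsto z\mapsto x$ of the defining relations to this last fact shows that $yzx$ commutes with $y$ and $zxy$ commutes with $z$; since all three are the same element $c$, it follows that $c$ is central. For a finite-dimensional simple module $V$ over $\C$, Schur's lemma then says $c$ acts by a scalar $\gamma$, and I would split the analysis according to whether $\gamma \ne 0$ or $\gamma = 0$.

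Suppose first that $\gamma \ne 0$. As $\dim V < \infty$, the identity $xyz = \gamma\cdot\mathrm{id}_V$ forces $x$, $y$ and $z$ to act invertibly; solving for $z = \gamma(xy)^{-1}$, one checks that the relations $yz = qzy$ and $zx = qxz$ become automatic, so that $V$ is precisely a finite-dimensional simple module over the quantum torus $B_q = \C\abrac{x^{\pm 1}, y^{\pm 1}}/(xy - qyx)$. On any such module $xyx^{-1}y^{-1} = q\cdot\mathrm{id}_V$, so comparing determinants gives $q^{\dim V} = 1$; in particular $q$ is a root of unity. If $q$ is a primitive $r$-th root of unity, then $x^r$ and $y^r$ are central in $B_q$ and therefore act by scalars on $V$; they cannot act by $0$ since $x,y$ are invertible, so $x$ satisfies the separable polynomial $T^r - \mu^r$ and is diagonalizable. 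Choosing an $x$-eigenvector $e$, the vectors $e, ye, \dots, y^{r-1}e$ are $x$-eigenvectors with pairwise distinct eigenvalues spanning a submodule, whence $\dim V = r$. Conversely, for a primitive $r$-th root of unity $q$ with $r > 1$ I would exhibit the $r$-dimensional module on $\bigoplus_{i\in\Z/r}\C e_i$ given by $xe_i = q^i e_i$, $ye_i = e_{i+1}$, $ze_i = q^{-i}e_{i-1}$; it clearly satisfies the relations, and it is simple because the eigenvalues of $x$ are distinct and $y$ cyclically permutes the eigenlines.

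When $\gamma = 0$, at least one of $x, y, z$ acts non-invertibly on $V$; for such a generator, say $z$, the subspace $\ker z$ is a nonzero submodule — its stability under $x$ and $y$ follows from $zx = qxz$ and $zy = q^{-1}yz$ — so simplicity forces $z = 0$ on $V$, and $V$ descends to the quantum plane $\C\abrac{x,y}/(xy-qyx)$. Running the same kernel argument once more there, either one of $x, y$ acts as $0$ and $V$ is one-dimensional, or both act invertibly and we are again in the quantum-torus situation. Collating the two cases gives the first assertion: if $q$ is not a root of unity then every finite-dimensional simple module is one-dimensional, while if $q$ is a primitive $r$-th root of unity with $r > 1$ the only possible dimensions are $1$ and $r$, and both occur.

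For the last statement, a one-dimensional representation is a triple of scalars $(\xi,\eta,\zeta)$ with $(1-q)\xi\eta = (1-q)\eta\zeta = (1-q)\zeta\xi = 0$, so whenever $q \ne 1$ the scheme of one-dimensional representations is the union of the three coordinate axes in $\C^3$ — equivalently the critical locus of $f = xyz$, as used in the main text — which is manifestly independent of $q$. The step carrying the real content, and the one I expect to require the most care, is the determination that a finite-dimensional simple module of the quantum torus has dimension exactly the order of $q$: the determinant argument yields only $q^{\dim V} = 1$, and the reverse inequality needs the centrality of $x^r$ and $y^r$ together with the diagonalization argument above. Everything else reduces to short ``the kernel is a submodule'' observations and the explicit construction.
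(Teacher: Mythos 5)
The paper does not supply a proof of this lemma; it merely cites \cite{BJL,DeLLeB}, so there is no paper argument to compare against. Your proposal, however, is a complete and correct self-contained proof. The key steps all check out: the central element $c=xyz$ (verified via $xyz=yzx=zxy$ and cyclic symmetry); the dichotomy $\gamma\neq 0$ versus $\gamma=0$; the reduction in the invertible case to the quantum torus, where the determinant identity $q^{\dim V}=1$ and the centrality of $x^r,y^r$ combined with the eigenvector chain $e,ye,\dots,y^{r-1}e$ pin the dimension down to exactly $r$; the explicit $r$-dimensional simple module (whose relations and simplicity I verified); and the kernel-is-a-submodule argument in the $\gamma=0$ case. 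The description of the one-dimensional representation scheme as $V(\xi\eta,\eta\zeta,\zeta\xi)$ for $q\neq 1$, matching $\crit(xyz)$, is also right. One small point worth making explicit if you write this up: in the invertible case you derive that the relations involving $z$ become automatic after substituting $z=\gamma(xy)^{-1}$; spelling out the short computation (using $x^{-1}y=q^{-1}yx^{-1}$ and $xy^{-1}=q^{-1}y^{-1}x$) would make the reduction to the quantum torus airtight.
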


Note that the formula in \autoref{thm_quantumC3} has terms in precise
correspondence with the possible dimensions of simple representations.  Indeed, when $q$ is a primitive $r$th root of unity, we have
\begin{eqnarray*} U_{Q,W}(t) & =
&\displaystyle\Exp\left(\frac{2\cL-1}{\cL-1}\frac{t}{1-t} + (\cL-1)
  \frac{t^r}{1-t^r}\right) \nonumber \\
  & = &
\displaystyle\Exp\left(-\frac{-2\cL^{\oh}+\cL^{-\oh}}{\denom}\frac{t}{1-t}
- \frac{-\cL^{-\tfrac{3}{2}}\cdot \cL(\cL-1)^2}{\denom}\frac{t^r}{1-t^r}\right).
\label{eq_r2}
\end{eqnarray*}
The first summand  is again given by the virtual motive of one-dimensional representations, which are, of course, all simple. But there are also simple representations of dimension $r$, and they contribute the rational function $\frac{t^r}{1-t^r}$ to the generating series.  We may write its coefficient as
\begin{align}
\frac{-\cL^{-\tfrac{3}{2}}\cdot \cL(\cL-1)^2}{\denom} = \frac{[\A^1]_\vir[\cP^2\setminus Y]_{\vir}}{\denom},\label{eq_newcoeff}
\end{align}
where $Y \subset \cP^2$ denotes a triangle of projective lines.  To understand the origin of this triangle, we follow the noncommutative projective geometry approach of~\cite{ATVdB2,A,DeL}.  We observe that the moduli stack $\gM(J_{Q_1,W_q},r)$ of $r$-dimensional representations has a Zariski-open substack $\gM_{\rinv} \subset \gM(J_{Q_1,W_q},r)$ consisting of simple representations on which the central element $g=xyz$ acts invertibly.  Let us denote by $\A^3_q$ the noncommutative affine space defined by $J_{Q_1,W_q}$.  Then elements of $\gM_{\rinv}$ correspond to skyscraper sheaves on the noncommutative variety $\A^3_q \setminus \{g=0\}$, obtained by removing the coordinate planes.

According to~\cite{DeL}, the  stack $\gM_{\rinv}$ has a coarse moduli space $X$ that is a smooth threefold, and the stabilizers at each point are the scalars $\C^*$.  Moreover, the grading on $J_{Q_1,W_q}$ gives rise to a $\C^*$-action on representations, and this action makes $X$ into a principal $\C^*$-bundle over $\cP^2 \setminus Y$.  This fibration has the following interpretation: starting with a skyscraper sheaf $\sF$ on $\A^3_q \setminus \{g=0\}$, we take its direct image $\pi_*\sF$ along the quotient map $\pi : \A^3_q\setminus \{0\} \to \cP^2_q$ to the corresponding noncommutative $\cP^2$. The support of $\pi_*\sF$ defines a point in the ``centre'' of $\cP^2_q \setminus \{g=0\}$, which is the commutative variety $\P^2 \setminus Y$.

Assembling these facts, we can easily compute the virtual motive
\[
[\gM_\rinv]_\vir = \frac{[X]_\vir}{[\C^*]_\vir} = \frac{[\A^1\setminus\{0\}]_\vir[\cP^2\setminus Y]_\vir}{[\C^*]_\vir}
\]
Therefore $[\gM_\rinv]$ is very close to being the coefficient~\eqref{eq_newcoeff}, but the latter looks like a line bundle over $\cP^2\setminus Y$, rather than a $\C^*$-bundle.  We expect that this discrepancy can be explained by considering the closure $\overline{\gM_\rinv}\subset \gM(J_{Q_1,W_q},r)$, which evidently includes fixed points of the $\C^*$-action.

\subsubsection{The conjectures}

Notice that in all cases studied in the previous section, the answer had the general rational form
\begin{equation}
\label{conj_Q1}
U_{Q_1,W}(t) = \Exp\left(-\sum_{i=1}^{k} \frac{M_i}{\cL^\half-\cL^{-\half}} \frac{t^{m_i}}{1-t^{m_i}} \right),
\end{equation}
where $m_1=1, \ldots, m_k\in \N$ 
are the dimensions in which there exist {\em simple} modules for the
algebra $J_{Q_1, W}$, and $M_i\in\M$ are motivic expressions without
denominators, with $M_1$ being the virtual motive of the scheme parametrizing one-dimensional
simple modules. 

For the homogenized Weyl case, we have 
\[J_{Q_1,W_{hW}} = \C\langle x, y, z\rangle / ([x,y]-z^2, [x,z], [y,z]).
\]
\begin{lemma}\label{lem_weyl}
Every finite-dimensional simple module for $J_{Q_1,W_{hW}}$ has dimension one.
\end{lemma}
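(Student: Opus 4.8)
The plan is to exploit the fact that $z$ is a \emph{central} element of $J_{Q_1,W_{hW}}$. The relations $[x,z]=0$ and $[y,z]=0$ say precisely that $z$ commutes with both other generators, hence with all of $J_{Q_1,W_{hW}}$. Now let $V$ be a finite-dimensional simple module. Since $\C$ is algebraically closed and $\dim_\C V < \infty$, Schur's lemma forces the central element $z$ to act on $V$ as a scalar $\lambda \in \C$. The remaining relation $[x,y] = z^2$ then becomes the operator identity $[x_V, y_V] = \lambda^2 \operatorname{Id}_V$ on $V$. Taking traces, the left-hand side is a commutator and so has trace zero, while the right-hand side has trace $\lambda^2 \dim_\C V$; since $\dim_\C V \geq 1$, we conclude $\lambda = 0$, i.e.\ $z$ acts as zero on $V$.

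Consequently $V$ descends to a module over the quotient algebra $J_{Q_1,W_{hW}}/(z)$. In this quotient the relation $[x,y]=z^2$ becomes $[x,y]=0$, while the other two relations become trivial, so
\[
J_{Q_1,W_{hW}}/(z) \cong \C\langle x,y\rangle/([x,y]) = \C[x,y],
\]
the commutative polynomial ring in two variables. Every finite-dimensional simple module over $\C[x,y]$ is one-dimensional, so $\dim_\C V = 1$, as claimed.

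There is no serious obstacle here: the argument is a streamlined version of the proof given above for the Jordan deformation, and in fact simpler, since $z$ is genuinely central rather than merely nilpotent, so one gets the scalar action of $z$ for free from Schur's lemma instead of having to produce a submodule by hand. The only point requiring (minor) care is the invocation of Schur's lemma, which is legitimate because $V$ is finite-dimensional over the algebraically closed field $\C$.
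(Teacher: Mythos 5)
Your proof is correct, but it takes a genuinely different (and slicker) route than the paper's. The paper deliberately mirrors the proof of its Lemma~\ref{lem_jordan} for the Jordan plane: it shows that $z^{n+2} = [z^n x, y]$ is a commutator, hence trace-zero, which forces $z$ to act nilpotently on $V$; then the kernel of $z$ is a nonzero submodule, so $z$ acts as zero and $V$ descends to $\C[x,y]$. You instead observe that the relations $[x,z]=[y,z]=0$ make $z$ \emph{central}, so Schur's lemma immediately gives a scalar action $z = \lambda\,\mathrm{Id}_V$, and then the single trace computation on $[x,y]=z^2$ kills $\lambda$. This short-circuits the nilpotency-and-submodule argument entirely. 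The trade-off is that your route exploits centrality, which is special to the Weyl case and is precisely what \emph{fails} in the Jordan case (there $y$ is not central, and the paper needs the longer argument to produce the submodule by hand); the paper's choice to run the same template twice emphasizes the parallel structure of the two lemmas, whereas yours highlights the extra symmetry that makes the Weyl case easier.
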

\begin{proof}
For $n \ge 0$, we have
\[
z^{n+2} = z^n[x,y] = [z^nx,y] - [z^n,y]x = [z^nx,y]\]
in $J_{Q_1,W_{hW}}$. 
Thus $z^{n+2}$ is a commutator for $n \ge 2$.  The rest of the proof follows that of Lemma~\ref{lem_jordan} above. 
\end{proof}

One-dimensional representations of $J_{Q_1,W_{hW}}$ are parametrized
by the (scheme-theoretic) critical locus of the function 
$f_{hW} = z^3$ on $\cC^3$, the double plane $\{z^2 = 0\}$. The motivic vanishing cycle
of $f_{hW}$ is $\cL^2(1-[\mu_3])$, and thus the corresponding virtual motive is $-\cL^\half(1-[\mu_3])$. 
Using \autoref{lem_weyl}, formula~\eqref{conj_Q1} turns into \autoref{conj_weyl} above. 

Let us finally turn to the Sklyanin algebras, with potential
\[
W_{a,b,c} = a\, xyz + b\, xzy + \frac{c}{3}(x^3 + y^3 + z^3).
\]
The only one-dimensional representation of these algebras is the trivial one, but the moduli space is highly non-reduced; it is the critical locus of the function
\[
f(x,y,z) = (a+b)xyz + \frac{c}{3}(x^3+y^3+z^3),
\] 
i.e.~it is the scheme-theoretic singular locus of a simple elliptic surface singularity of type $\widetilde E_6$, and therefore has length eight.

One-dimensional representations have the following motivic DT invariant:
\begin{lemma}\label{lem_Skly1d}
The virtual motive for the moduli space of one-dimensional representations is given by
\[
[\gM(J_{Q_1,W_{a,b,c}},1)]_\vir = \irL[3] \rbrac{[S_\DT,\mu_3] - [E_\DT](\cL-1) - 1}
\]
where, as before, $S_\DT$ is the affine triple cover of $\P^2\setminus E_\DT$ with its canonical action of $\mu_3$.
\end{lemma}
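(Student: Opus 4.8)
The plan is to identify the moduli space of one-dimensional representations with the critical locus of a homogeneous cubic, and then to compute its virtual motive using the Denef--Loeser description of the motivic nearby cycle of a homogeneous polynomial. Since $Q_1$ has a single vertex and its three arrows act on a one-dimensional representation as commuting scalars $x,y,z\in\C$, we have $R(Q_1,1)=\A^3$ with $\GG_1$ acting trivially, and the trace of the potential $W_{a,b,c}$ on one-dimensional representations is the cubic
\[
f(x,y,z) = (a+b)\,xyz + \tfrac{c}{3}(x^3+y^3+z^3),
\]
using $xyz = xzy$ for scalars, in agreement with the discussion preceding the statement. Hence the moduli space of one-dimensional representations is the critical locus $\crit(f)\subset\A^3$, and by the definition of the virtual motive $[\crit(f)]_\vir = -(-\cL^{\oh})^{-3}[\varphi_f] = \irL[3]\,[\varphi_f]$; so it suffices to show that $[\varphi_f] = [S_\DT,\mu_3] - [E_\DT](\cL-1) - 1$ in $\M$.

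For the nearby cycle I would invoke the theorem of Denef and Loeser \cite{DL,L}: if $f$ is a polynomial on $\A^n$ that is homogeneous of degree $d$, then the motivic nearby cycle $[\psi_f]$ is the class of the affine Milnor fibre $\{f=1\}$ equipped with the $\mu_d$-action $\zeta\cdot(x_1,\ldots,x_n) = (\zeta x_1,\ldots,\zeta x_n)$. In our case $d=3$ and $\{f=1\}$ is, by definition, the affine cubic surface $S_\DT$. The projection $S_\DT\to\P^2\setminus E_\DT$, $(x,y,z)\mapsto[x:y:z]$, is an unramified degree-three covering whose deck group is exactly this scalar $\mu_3$-action; since $E_\DT$ is a smooth plane cubic we have $\pi_1(\P^2\setminus E_\DT)\cong\Z/3$, so $S_\DT$ is its universal cover and the monodromy action is the canonical $\mu_3$-action appearing in the statement. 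Thus $[\psi_f]=[S_\DT,\mu_3]$. For the special fibre, $f^{-1}(0)\subset\A^3$ is the affine cone over the plane cubic $E_\DT\subset\P^2$; removing the origin exhibits it as a Zariski-locally trivial $\C^*$-bundle over $E_\DT$, so $[f^{-1}(0)] = [E_\DT](\cL-1)+1$, with the trivial equivariant structure. Subtracting gives $[\varphi_f] = [S_\DT,\mu_3] - [E_\DT](\cL-1) - 1$, and multiplying by $\irL[3]$ yields the claim.

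There is no serious technical obstacle here; the step that needs the most care is the equivariant bookkeeping in the previous paragraph---matching the monodromy $\mu_3$-action on the Milnor fibre $\{f=1\}$ with the deck-transformation action on $S_\DT$ used to define $M_1$ in \autoref{conj_Skly}, and keeping in mind that $f^{-1}(0)$ contributes to $[\varphi_f]$ with trivial $\mu_3$-structure even though it is stable under scaling. It is also worth noting at this point that the genericity hypotheses $abc\neq 0$ and $(3abc)^3\neq(a^3+b^3+c^3)^3$, together with the smoothness of $E_\DT$, are precisely what make $S_\DT$ the genuine universal cover and force $\crit(f)$ to be the scheme-theoretic singular locus of a simple elliptic $\widetilde E_6$-singularity, hence of length eight, as asserted after the statement.
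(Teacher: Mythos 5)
Your argument is correct, and the final bookkeeping agrees with the paper's, but you take a genuinely different route to $[\psi_f]$. The paper blows up the origin to get a normal crossings model on $X=\mathrm{Tot}(\OO_{\P^2}(-1))$, with the exceptional $\P^2$ of multiplicity $3$ and the strict transform $\mathrm{Tot}(\OO_{E_\DT}(-1))$ of multiplicity $1$, and then reads off $[\psi_f]$ from the stratum-by-stratum Denef--Loeser formula \cite[Thm.~3.3]{DL}; the open stratum of $\P^2$ contributes precisely $[S_\DT,\mu_3]$, while the remaining two contributions cancel. You instead invoke the general fact that for a homogeneous polynomial of degree $d$ one has $[\psi_f]=[f^{-1}(1)]$ with the scaling $\mu_d$-action, which bypasses the resolution entirely. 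Be aware that this statement, in the equivariant form you need, is not literally a numbered theorem of \cite{DL,L}; in this literature it is usually traced to \cite[Prop.~1.11]{BBS} and its equivariant refinements, and its proof for an isolated homogeneous singularity amounts to exactly the one blow-up the paper performs. So your route is a clean shortcut, while the paper's is self-contained relative to \cite{DL}; the content is the same. Your handling of $[f^{-1}(0)]=[E_\DT](\cL-1)+1$ and the remark that the special fibre enters $[\varphi_f]$ with trivial equivariant structure are both correct and worth keeping explicit.
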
  

\begin{proof}
The function $f$ has an isolated critical point at the origin, and $f^{-1}(0)$ is the cone over the elliptic curve $E_\DT \subset \P^2$.  Blowing up the origin in $\C^3$ gives a normal crossings resolution of $f$ on $X = {\rm Tot}(\OO_{\P^2}(-1))$, whose irreducible components are given by the zero section (with multiplicity three), and the total space of $\OO_{E_\DT}(-1)$. The result is then an immediate consequence of 
Denef and Loeser's formula~\cite[Thm.~3.3]{DL}.
\end{proof}

To motivate \autoref{conj_Skly}, we recall some basic facts about simple representations of the Sklyanin algebras~\cite{ATVdB2,A,DeLLeB,W}.  Let $r = |\sigma|$ be the order of the translation $\sigma : E_\pt \to E_\pt$.  Then higher-dimensional simple representations exist if and only if $r < \infty$.  In this case there, are explicit bounds on the dimensions~\cite{W}, which depend on whether of not $r$ is divisible by three.  Combining these bounds with \eqref{conj_Q1}, explains the rough shape of the expressions appearing in \autoref{conj_Skly}.

When $r$ is not divisible by three, somewhat more information is available.  In this case, all nontrivial simple representations have dimension $r$.  As in the discussion following \autoref{lem_Skly1d}, we can consider the moduli stack $\gM_{\rinv}$ of simple $r$-dimensional representations on which a certain cubic central element $g$ is invertible.  Its coarse moduli space is a $\C^*$-bundle over the complement of the elliptic curve $E_\pt/\sigma \subset \cP^2$.  The analogy between $E_\pt/\sigma$ and the triangle $Y \subset \cP^2$ in \eqref{eq_newcoeff} explains the appearance of the isogenous curve in part (ii) of the conjecture.

\subsubsection{Interpreting the results for the conifold quiver}

For the undeformed conifold, the formula of \autoref{thm_qconifold} includes~\cite[Thm.2.1]{MMNS}:
\begin{eqnarray*}
U_{Q,W}(t_1, t_2) & = &
\displaystyle\Exp\left(\frac{(\cL+\cL^2)t_1t_2-\cL^\oh(t_1+t_2)}{\cL-1}\sum_{n\ge0}(t_1t_2)^n\right)\\
& = &
\Exp\left(-\frac{-(\cL^{\frac32}+\cL^\oh)}{\denom}\frac{t_1t_2}{1-t_1t_2}\right.\\
&   & \ \ \ \ \ \ \ \ \left. -\frac{1}{\denom}\frac{t_1}{1-t_1t_2}
-\frac{1}{\denom}\frac{t_2}{1-t_1t_2}\right).
\end{eqnarray*}

In this case, there are two vertex simples, the representations with
dimension vectors $(1,0)$ and $(0,1)$; there are also simples of dimension vector
$(1,1)$. We are unable to give a systematic explanation of the denominators of these rational functions:
for all three dimension vectors, we get the denominator ${1-t_1t_2}$. 
The numerators are clear: they simply record the dimension vectors. 
As for the motivic coefficitents, for $(1,0)$ and $(0,1)$ the moduli 
space is a reduced point, in agreement with the coefficients above. On the other hand, 
it can be checked that a representation with dimension vector $(1,1)$ is simple if and only if
there is a nonzero arrow in each direction, and thus the parameter space of
simple representations is $(\C^2\setminus \pt)^2/\C^*$ which is the complement of
the zero section in the resolved conifold. Thus its virtual motive
is $-\cL^{-\frac32}(\cL+1)^2(\cL-1)$ which is close to, but not quite, the
numerator above. Instead, the numerator $-(\cL^{\frac32}+\cL^\oh)$ is the
virtual motive of the resolved (commutative) conifold $X$, the resolution 
$\pi\colon X\to Z$ of the singular conifold $Z = \mathop{\rm Spec} \C[x,y,z,t] / (xt-yz)$.

For the generic deformed conifold, we have
\[ U_{Q_\con, W_q}=\Exp\left(\frac{3\cL^{\frac{1}{2}}-\cL^{-\frac{1}{2}}}{\cL^{\frac{1}{2}}-\cL^{-\frac{1}{2}}}\frac{t_{0}t_{1}}{1-t_{0}t_{1}}-\frac{1}{\cL^{\frac{1}{2}}-\cL^{-\frac{1}{2}}}\frac{t_{0}+t_{1}}{1-t_{0}t_{1}}\right).
\]
Once again, the numerator of the first term is {\em not} the motive of the space of simples of dimension vector $(1,1)$. But it can be interpreted geometrically in the following way.  We begin by considering the global function $f = xt = yz$ on the conifold singularity $Z$, and its pullback $g=\pi^*(f)$ along the resolution $\pi : X \to Z$.  A straightforward calculation using the standard charts on the conifold shows that the virtual motive of $\crit(g)$ is the desired expression $-3\cL^{\frac{1}{2}}+\cL^{-\frac{1}{2}}$. 

Now, since $X$ is a smooth Calabi--Yau threefold, a choice of global holomorphic volume form gives rise to an isomorphism $\Omega^1_X \cong \wedge^2 T_X$ between forms and bivectors.  Under this isomorphism, the form $dg$ gives a Poisson structure $\pi \in H^0(X,\wedge^2T_X)$.   Via the derived equivalence $D(X) \cong D(J_{Q_\con,W_q})$ for $q=1$, the $q$-deformation of the Jacobi algebra corresponds to a noncommutative deformation of $X$ that we expect to be a deformation quantization of this Poisson structure.  In particular, the critical points of $g$ are exactly the zero-dimensional symplectic leaves of $\pi$, which are precisely the points one expects to quantize to skyscraper sheaves on the noncommutative deformation.

\subsubsection{Interpreting results for the cyclic quiver}

We will be brief, since the interpretations we can offer are analogous to the cases already studied. We refer for the details to~\cite{C}.  As usual, at $q=1$ the formula in Theorem~\ref{thm:cyclicAn} recovers the result of Bryan and Morrison~\cite{BM, Mor}
\[ U_{Q_{n+1}, W_1}(t)= \Exp\left(\frac{\cL^{\frac{3}{2}}+(n-1)\cL^{\frac{1}{2}}}{\cL^{\frac{1}{2}}-\cL^{-\frac{1}{2}}}\frac{y^{\delta}}{1-y^{\delta}}+\sum_{\alpha\in\Delta}\frac{\cL^\half}{\cL^{\frac{1}{2}}-\cL^{-\frac{1}{2}}}\frac{y^{\alpha}}{1-y^{\delta}}\right).
\]
The first coefficient here, up to sign, is the motive of the (unique) crepant resolution of the quotient 
singularity $\Spec Z$. The other coefficients are all virtual motives of the affine line. 
For $\alpha=\delta_i$, it is indeed
clear that the moduli space is simply the affine line, parameterized by the value of the loop arrow $b_i$. 
For generic $q$ on the other hand, the coefficient $(n+1)\cL^{\frac{1}{2}}-\cL^{-\frac{1}{2}}$ of the first term in 
Theorem~\ref{thm:cyclicAn} can again be interpreted, up to sign, as the virtual motive of the zero-set of a natural one-form on the crepant resolution. 

\section{Proofs}
\label{sec_proofs}

\subsection{Quivers with cuts}

A subset $I\subset E(Q)$ is called a cut of $(Q,W)$, if the potential
$W$ is homogeneous of degree $1$ for the arrow grading of $Q$ where
arrows in $I$ have degree 1 and arrows not in $I$ have degree zero.
Given a cut $I$ of $(Q,W)$, let $Q_I=(V(Q),E(Q)\ms I)$, and
let $J_{W,I}$ be the quotient of $\cC Q_I$ by the ideal $$(\dd_I W)=(\dd W/\dd a,a\in I).$$
Then by~\cite[Prop.1.7]{MMNS}, 
\begin{equation}\label{eq:dimred}
U_{Q,W}(t)=\sum_{\al\in\cN^{V(Q)}}(-\cL^\oh)^{\chi(\al,\al)+2d_I(\al)}\frac{[R(J_{W,I},\al)]}{[\GG_\al]}t^\al,\end{equation}
where $d_I(\al)=\sum_{(a:i\to j)\in I}\al_i\al_j$ for any $\al\in\cZ^{V(Q)}$.

\subsection{Deformations of affine three-space}

\subsubsection{Quantum affine three-space}\label{sec_qC3_proof}

In this section, we prove \autoref{thm_quantumC3}.  The proof will make heavy use of the assumption $q\neq 0$.  As already observed, this is precisely the condition for the Jacobian algebra $J_{Q_1, W_q}$ to be 3-Calabi--Yau, although we will not use this fact directly.

We begin by applying the cut $I=\{z\}$ to $(Q_1,W_q)$, which reduces the problem to studying representations of the algebras
\[
\C_q[x,y] = \C\abrac{x,y}/(xy-qyx)
\]
for $q \in \C^\times$.  More precisely, using formula~\eqref{eq:dimred}, we have
\[ U_{Q_1,W_q}(t) = \sum_{n\geq 0} \frac{[R_q(n)]}{[\GL(n)]} t^n  \]
where the variety
\[
R_q(n) = \{ (A,B) \in \End(V) \times \End(V)  : AB=qBA \}
\]
is the set of pairs of $q$-commuting $(n\times n)$-matrices, and $V= \C^n$ is a fixed $n$-dimensional vector space.   We think of $\C_q[x,y]$ geometrically as functions on a quantum plane $\A^2_q$, so that finite-dimensional representations of $\C_q[x,y]$ correspond to torsion coherent sheaves on $\A^2_q$; compare~\cite{BM}.

For the calculation, it will be useful to consider the four subvarieties
\begin{align*}
R_q^{I,I}(n) &= \set{(A,B)\in R_q(n)}{A,B\textrm{ are invertible}} \\
R_q^{I,N}(n) &= \set{(A,B)\in R_q(n)}{A\textrm{ is invertible and }B\textrm{ is nilpotent}} \\
R_q^{N,I}(n) &= \set{(A,B)\in R_q(n)}{A\textrm{ is nilpotent and }B\textrm{ is invertible}} \\
R_q^{N,N}(n) &= \set{(A,B)\in R_q(n)}{A,B\textrm{ are nilpotent}}
\end{align*}
The geometric interpretation of these subvarieties is as follows.  The quantum plane $\A^2_q$ contains a privileged pair of commutative affine lines $L_x, L_y \subset \A^2_q$ corresponding to the two-sided ideals $(x),(y) \subset \C_q[x,y]$. These lines intersect in the point $0 \in \A^2_q$ corresponding to the ideal $(x,y)$.  This gives a stratification of $\A^2_q$, and points of the varieties $R_q^{-,-}$ above correspond to sheaves on $\A^2_q$ whose supports are completely contained in one of the four strata.

Define the generating series
\[
U_{q}^{I,I}(t) = \sum_{n \ge 0} \frac{[R_q^{I,I}(n)]}{[\GL(n)]}t^n,
\]
and define $U_q^{N,I},U_q^{N,N}$ and $U_q^{I,N}$ similarly.  Then we have the following basic
\begin{lemma}[cf. {\cite[Lemma 1]{FF}}]\label{nilinvlem} For all $q \in \C^\times$, there is a factorisation
\[
U_{Q_1,W_q} = U_{q}^{I,I} \cdot   U_{q}^{I,N} \cdot  U_{q}^{N,I} \cdot  U_{q}^{N,N}
\]
of the universal generating series.
\end{lemma}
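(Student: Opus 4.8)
The plan is to prove the factorisation by decomposing each representation variety $R_q(n)$ according to the generalized-eigenspace structure of the pair $(A,B)$, separating the ``nilpotent'' and ``invertible'' parts of each operator. Concretely, given $(A,B) \in R_q(n)$, one first decomposes $V = \C^n$ as $V = V^0_A \oplus V^{\mathrm{inv}}_A$, where $V^0_A$ is the generalized $0$-eigenspace of $A$ and $V^{\mathrm{inv}}_A$ is the sum of the nonzero generalized eigenspaces. Because $AB = qBA$, the operator $B$ maps the generalized $\lambda$-eigenspace of $A$ into the generalized $q\lambda$-eigenspace of $A$; in particular, since $q\neq 0$, $B$ preserves both $V^0_A$ (as $q\cdot 0 = 0$) and $V^{\mathrm{inv}}_A$ (since $B$ permutes the nonzero eigenvalues by multiplication by $q$). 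Thus the decomposition $V = V^0_A \oplus V^{\mathrm{inv}}_A$ is $\C_q[x,y]$-invariant. One then further splits each summand by the generalized-eigenspace decomposition of $B$; the same argument (using that $A$ multiplies the $B$-eigenvalues by $q^{-1}$, which again requires $q\neq 0$) shows these refinements are $\C_q[x,y]$-submodules. The upshot is a canonical, functorial direct-sum decomposition of every finite-dimensional $\C_q[x,y]$-module into four pieces, of types $(I,I)$, $(I,N)$, $(N,I)$, $(N,N)$, according to whether $A$ (resp. $B$) is invertible or nilpotent on that piece.

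Once this canonical decomposition is established, the factorisation of generating series is a standard motivic bookkeeping argument. The key point is that the ring $\M$ carries enough structure (it is a pre-$\lambda$-ring, and more relevantly one has the usual ``power structure'' on motivic classes of quotient stacks) for the following to hold: if a variety of quiver representations (or a quotient stack of such) decomposes canonically as a ``product'' of substacks indexed by an independent set of data — here, the four $\GG_\al$-invariant pieces with their independently varying dimensions — then the generating series multiply. More precisely, one writes $n = n_{II} + n_{IN} + n_{NI} + n_{NN}$, observes that the stack of pairs $(A,B) \in R_q(n)$ equipped with the canonical splitting is a locally trivial fibration (in the Zariski or at least in the motivic sense) over the product of the four stacks of ``pure'' type with a factor accounting for the choice of the flag/splitting, and checks that the $[\GL(n)]$ denominators assemble correctly. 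This is exactly the shape of argument in \cite[Lemma 1]{FF} cited in the statement, and the referenced \autoref{nilinvlem} label; I would either invoke that result essentially verbatim after establishing the canonical decomposition, or spell out the fibration-and-motive-counting in a couple of lines.

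I would organize the proof as follows. First, fix $(A,B)\in R_q(n)$ and prove the ``eigenvalue shift'' lemma: $B$ sends $\ker(A-\lambda)^k$ into $\ker(A-q\lambda)^k$, and symmetrically $A$ sends $\ker(B-\mu)^k$ into $\ker(B - q^{-1}\mu)^k$. Second, deduce that $V = V^0_A \oplus V^{\mathrm{inv}}_A$ is a submodule decomposition, and on $V^{\mathrm{inv}}_A$ further split off $V^0_B$ versus $V^{\mathrm{inv}}_B$ (and likewise inside $V^0_A$); this yields the four-piece canonical decomposition, with $A$ acting nilpotently or invertibly and $B$ acting nilpotently or invertibly on the respective pieces. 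Third, translate this into the statement that the groupoid of $n$-dimensional $\C_q[x,y]$-modules is equivalent to the product of the four groupoids of ``pure type'' modules, and conclude the generating-series identity by the multiplicativity of motivic classes of stacks under such product decompositions. Finally, note that $U_q^{I,I}, U_q^{I,N}, U_q^{N,I}, U_q^{N,N}$ are by definition the generating series of these four pure-type groupoids, giving the claimed factorisation of $U_{Q_1,W_q}$.

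The main obstacle is the second step, and specifically making the canonical decomposition genuinely functorial and compatible with the motivic ``product'' formalism — i.e. verifying that assigning to $(A,B)$ its canonical four-block form defines a morphism of stacks exhibiting $\gM(J_{Q_1,W_q}, n)$ as the appropriate product (up to the combinatorial sum over how $n$ splits), so that one may legitimately factor the series. The eigenvalue-shift lemma itself is elementary; the subtlety is purely that one must be careful that the splitting varies algebraically in families and that no stacky/automorphism factors are double-counted, which is precisely the content that \cite[Lemma 1]{FF} handles in the commutative ($q=1$) case and which goes through unchanged here because the only input used is $q\neq 0$.
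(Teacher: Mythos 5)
Your proof is correct and follows essentially the same approach as the paper. The paper's proof of \autoref{nilinvlem} uses the Fitting decomposition via $\ker A^N$ and $\operatorname{im} A^N$ for $N\gg 0$, invoking $A^NB=q^NBA^N$ to show $B$ respects the splitting, then splits further by $B$ and concludes the factorisation; your version phrases the same splitting in terms of generalized eigenspaces with the eigenvalue-shift observation, and spells out the motivic bookkeeping that the paper dismisses with ``the result now follows easily,'' but the substance is the same.
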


\begin{proof} Suppose given a point $(A,B) \in R_q(n)$ corresponding to a representation of $\C_q[x,y]$ on the vector space  $V = \C^n$.  Then the kernel and image of $A^N$ for $N \gg 0$ decompose $V$ into direct summands on which $A$ acts nilpotently and invertibly, respectively.  Since $q^N \ne 0$, the relation $A^NB = q^NBA^N$ implies that $B$ preserves the kernel and image of $A^N$.  Decomposing further using the action of $B$, we obtain a canonical decomposition
\[
V = V_{I,I} \oplus V_{I,N} \oplus V_{N,I} \oplus V_{N,N}
\]
into subrepresentations, corresponding to sheaves whose supports lie on a single stratum in $\A^2_q$.  The result now follows easily.
\end{proof}

\begin{lemma}\label{nil} The series $U_{q}^{N,N}$, $U_{q}^{N,I}$ and $U_q^{I,N}$ are independent of $q \in \C^\times$, namely
\[
U_q^{N,N}(t) = \Exp\rbrac{\frac{1}{\LL-1}\frac{t}{1-t}}, \]
and
\[ U_q^{N,I}(t) = U_q^{I,N}(t)= \Exp\rbrac{\frac{t}{1-t}}\]
for all $q \in \C^\times$.
\end{lemma}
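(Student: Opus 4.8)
The plan is to compute all three series directly; independence of $q$ will come out as a by-product. In each case I would stratify the relevant representation variety by the conjugacy class of the first matrix: writing $R_q^{-,-}(n)=\bigsqcup_{\lambda\vdash n}\Sigma_\lambda$, where $\Sigma_\lambda$ is the locally closed locus on which $A$ has nilpotent Jordan type $\lambda$, fix a representative $A_\lambda$. The centralizer $Z:=Z_{\GL(n)}(A_\lambda)$ is the unit group of the finite-dimensional algebra $\End_{\C[t]}(M_\lambda)$, with $M_\lambda=\bigoplus_i\C[t]/t^{\lambda_i}$; it is a \emph{special} group (an extension of a product of general linear groups by a unipotent group), so the orbit $O_\lambda\cong\GL(n)/Z$ satisfies $[O_\lambda]=[\GL(n)]/[Z]$ and $\Sigma_\lambda\to O_\lambda$ is a Zariski-locally trivial fibration. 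Hence $[\Sigma_\lambda]=[O_\lambda]\cdot[\Phi_{q,\lambda}]$, where $\Phi_{q,\lambda}$ is the fibre over $A_\lambda$: the set of $B$ of the prescribed type (invertible, resp.\ nilpotent) satisfying $A_\lambda B=qBA_\lambda$. These are standard inputs of the kind used in the dimensional-reduction computations cited above.

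For $R_q^{N,I}$ the fibre is $\Phi_{q,\lambda}=\{B\in\GL(n):B^{-1}A_\lambda B=qA_\lambda\}$. Since $q\neq0$, the matrix $qA_\lambda$ is again nilpotent of Jordan type $\lambda$, hence conjugate to $A_\lambda$, so $\Phi_{q,\lambda}$ is nonempty and is a left coset of $Z$; therefore $[\Phi_{q,\lambda}]=[Z]$ and $[\Sigma_\lambda]=[\GL(n)]$ for every $\lambda$ and every $q$. Summing over the partitions of $n$ gives $[R_q^{N,I}(n)]=p(n)\,[\GL(n)]$ with $p(n)$ the partition function, whence
\[
U_q^{N,I}(t)=\sum_{n\ge0}p(n)\,t^n=\prod_{k\ge1}(1-t^k)^{-1}=\Exp\rbrac{\frac{t}{1-t}},
\]
independently of $q$. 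For $U_q^{I,N}$ I would invoke the involution $(A,B)\mapsto(B,A)$, which identifies $R_q^{I,N}(n)$ with $R_{q^{-1}}^{N,I}(n)$ and hence gives the same series.

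For $R_q^{N,N}$ the fibre over $A_\lambda$ is $F_{q,\lambda}=\{B\text{ nilpotent}:A_\lambda B=qBA_\lambda\}$, and the whole point is to show $[F_{q,\lambda}]=[F_{1,\lambda}]$. I would grade $V$ so that the $j$-th rung of each Jordan block of $A_\lambda$ lies in degree $j$; then $A_\lambda$ is homogeneous of degree $-1$, and the operator $D_q\in\GL(V)$ acting by the scalar $q^{j}$ in degree $j$ satisfies $A_\lambda D_q=qD_qA_\lambda$, so left multiplication by $D_q$ carries the ordinary commutant $\End_{\C[t]}(M_\lambda)=\{B:A_\lambda B=BA_\lambda\}$ onto the twisted commutant $\{B:A_\lambda B=qBA_\lambda\}$. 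Now every $\C[t]$-module endomorphism of $M_\lambda$ is degree-non-increasing for this grading, hence block lower-triangular, so its eigenvalues are those of its degree-zero component, and it is nilpotent precisely when that component is; since $D_q$ only rescales the degree-zero component by a nonzero scalar in each degree, it preserves nilpotency, and therefore maps $F_{1,\lambda}$ isomorphically onto $F_{q,\lambda}$. It follows that $[R_q^{N,N}(n)]$ is independent of $q$, so $U_q^{N,N}=U_1^{N,N}$ is the generating series enumerating pairs of commuting nilpotent matrices, and I would finish by identifying this with $\Exp\bigl(\tfrac{1}{\LL-1}\tfrac{t}{1-t}\bigr)$ — either by quoting the classical Feit--Fine enumeration of commuting matrices~\cite{FF}, or by evaluating $\sum_{\lambda\vdash n}[O_\lambda]\cdot[\{\text{nilpotents in }\End_{\C[t]}(M_\lambda)\}]$, dividing by $[\GL(n)]$ and summing over $n$ (the low-degree terms are easy to check directly).

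The main obstacle is exactly the identity $[F_{q,\lambda}]=[F_{1,\lambda}]$: the naive rescaling $B\mapsto D_q^{-1}B$ need \emph{not} preserve nilpotency for an arbitrary choice of grading (already when $A_\lambda=0$, where it can turn a nilpotent matrix into one of nonzero trace), and it is precisely the block-triangularity of $\C[t]$-module endomorphisms with respect to the \emph{Jordan} grading that makes the argument go through. Everything else is routine: the stratification and the "special group" facts are standard, the $N,I$ computation is elementary, and the evaluation at $q=1$ is a finite combinatorial exercise.
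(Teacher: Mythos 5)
Your proposal is correct, and it follows the paper's overall strategy — stratify the representation variety by the Jordan type of the nilpotent matrix, use that centralizers of nilpotent elements are special so the orbit maps are Zariski-locally trivial, and reduce to the known $q=1$ values from \cite{BM,FF} — but it handles the nilpotent--nilpotent case in a genuinely different way.

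The paper never works with $R_q^{N,N}$ directly. Instead it introduces $R_q^{N,*}(n)=\set{(A,B)\in R_q(n)}{A\text{ nilpotent}}$ with $B$ \emph{unrestricted}, so that the fibre over a fixed $A_0$ in a nilpotent orbit is simply the coset $\h P$ of the centralizer Lie algebra $\h$ by a single element $P$ with $PA_0P^{-1}=q^{-1}A_0$; this fibre is trivially independent of $q$ (its class is $[\h]$), and no nilpotency-preservation issue arises. The paper then recovers $U^{N,N}_q=U^{N,*}_q/U^{N,I}_q$ from the factorization of $R^{N,*}_q$ into nilpotent and invertible parts of $B$ (exactly as in Lemma~\ref{nilinvlem}). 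You instead attack $R_q^{N,N}$ head-on: you must choose the twisting element carefully, and your choice $D_q$ adapted to the Jordan grading, together with the observation that $\C[t]$-module endomorphisms of $M_\lambda$ are block lower-triangular in that grading (so $D_q$ rescales the diagonal blocks degree by degree and hence preserves nilpotency), closes the gap you correctly flag as the crux. Both arguments are valid; the paper's factorization is slicker in that it makes the nilpotency issue evaporate, while your direct argument is self-contained and, as a bonus, your coset computation $[\Phi_{q,\lambda}]=[Z]$ yields the explicit identity $[R_q^{N,I}(n)]=p(n)\,[\GL(n)]$ and hence $\prod_{k\ge1}(1-t^k)^{-1}=\Exp\!\left(\tfrac{t}{1-t}\right)$ without needing to cite the $q=1$ case separately for that series.
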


\begin{proof}
The formulae in question for the case $q=1$ are easily extracted from the results of~\cite{BM}, so it suffices to demonstrate that the series are independent of $q$.
Moreover, since the equations defining $R_q(n) \subset \End(V) \times \End(V)$ are symmetric in the two factors, it is enough to show that the series $U^{N,I}_q$ and  $U^{N,*}_q=U^{N,N}_q\cdot U^{N,I}_q$ are independent of $q$.  Notice that, by the argument in \autoref{nilinvlem}, the series $U^{N,*}_q$ is the universal series for the sequence of varieties
\[
R_q^{N,*}(n) = \set{ (A,B)\in C_q(n) }{A\textrm{ is nilpotent}}.
\]
So, we must show that the motivic classes $[R_q^{N,*}(n)]$ and $[R_q^{N,I}(n)]$ are independent of $q$.  To do so, we consider the maps from $R^{N,*}_q$ and $R^{N,I}_q$ to the nilpotent cone in $\End(V) \cong \gln{}(n,\C)$, given by projection on the first factor.  We will show that, when restricted to a fixed nilpotent orbit, these maps are Zariski-locally trivial fibrations, and that their fibres are independent of $q$.

To this end, choose a nilpotent orbit $\OO\subset \End(V)$ and a matrix $A_0 \in \OO$.  Since $A_0$ is nilpotent and $q \ne 0$, we may choose an element $P \in \GL(V)$ such that $PA_0P^{-1} = q^{-1}A_0$.  Let $H \subset \GL(V)$ be the stabilizer of $A_0$ under the conjugation action, and let  $\h \subset \End(V)$ be its Lie algebra.  Acting by conjugation on $A_0$, we have a principal bundle $\GL(V) \to \OO$ with structure group $H$, which is Zariski-locally trivial since $H$ is special.
Given a local section $s : U \to \GL(V)$ over a subvariety $U \subset \OO$, consider maps
\[U \times H \to R_q^{N,I}\] 
and \[U \times \h {} \to R_q^{N,*},\]
both defined by the formula
\[
(A,h) \mapsto (A,s(A)hPs(A)^{-1}).
\]
One easily checks that these maps give local trivializations of $R_q^{N,I}$ and $R_q^{N,*}$ over~$U$.  Since the fibres are independent of $q$, the lemma follows.
\end{proof}

In order to complete the proof of \autoref{thm_quantumC3}, it remains to compute the series $U_q^{I,I}$.  We remark that the varieties $R^{I,I}_q$ parametrize modules over the localized algebra $\C_q[x^{\pm1},y^{\pm1}]$, which we can think of as functions on the quantum torus $T_q$ obtained by removing the lines $L_x,L_y \subset \A^2_q$.  As is well known, the algebra $\C_q[x^{\pm 1},y^{\pm1}]$ is isomorphic to the skew group ring $\cZ * (\C[x,x^{-1}])$ associated with the $\cZ$-action on the variety $\C^*$, where the generator $1 \in \cZ$ acts by multiplication by $q$.  If we denote by $[\C^*/\cZ]_q$ the quotient stack, then finite-dimensional modules over $\C_q[x^{\pm 1},y^{\pm 1}]$ are equivalent to torsion coherent sheaves on $[\C^* / \cZ]_q$.  

When $q$ is not a root of unity, the orbits of the $\cZ$-action are infinite, and hence there can be no nontrivial equivariant sheaves of finite length.  We therefore have
\[
U^{I,I}_q(t) = 1
\]
if $q$ is not a root of unity.

On the other hand, if $q$ is a primitive $r$th root of unity, we have an isomorphism of stacks $[\C^* /\cZ]_q \cong [\C^*/\cZ]_1$ induced by the $r$th-power map $\C^* \to \C^*$ and the inclusion $\cZ \cong r \cZ\subset \cZ$ of the stabilizer of the action.  Thus, there is an equivalence between finite-length sheaves on $T_q$ and finite-length sheaves on the commutative torus $T_1 = \Spec \C[u^{\pm1},v^{\pm 1}]$.  Since pulling back along the $r$th power map multiplies the length of a sheaf on $\C^*$ by $r$, this equivalence takes $n$-dimensional representations of $\C[u^{\pm 1},v^{\pm 1}]$ to $rn$-dimensional representations of $\C_q[x^{\pm 1},y^{\pm 1}]$.  We therefore have
\[
U^{I,I}_q(t) = U^{I,I}_1(t^r) = U^{N,N}_1(t^r)^{(\LL-1)^2} = \Exp\rbrac{(\LL-1)\frac{t^r}{1-t^r}},
\]
where the last two identities use the power structure and the results of \cite{BM}.

\subsubsection{The Jordan plane}\label{sec_Jordan_proof}

We now prove \autoref{thm_jordan}. Applying a cut along $I=\{z\}$ again,
we reduce to representations of the algebra
\[
\C_J[x,y] = \cC\abrac{x,y}/(xy-yx-y^2),
\]
the ring of functions on the Jordan plane.  We define the representation varieties
\[
R_J(n) = \set{(A,B) \in \End(V)\times \End(V)}{[A,B]=B^2}
\]
for $n \ge 0$, where once again $V$ denotes a fixed $n$-dimensional vector space.  Using \eqref{eq:dimred}, we have the equality
\[
U_{Q_1,W_J}(t) = \sum_{n \ge 0} \frac{[R_J(n)]}{[\GL(n)]} t^n.
\]
The series on the right can be easily computed using the results in the previous section.

Indeed, if $(A,B) \in R_J(n)$ then $B$ is nilpotent, as observed in \autoref{lem_jordan}; see also \cite[Lemma 2.1]{I}.  Projection on the second factor therefore gives a map from $R_J(n)$ to the nilpotent cone in $\End(V)$.  Over a fixed nilpotent orbit $\OO \subset \gln{n}$ this map is an affine bundle for the vector bundle over $\OO$ whose fibre at $B \in \OO$ is the centralizer of $B$ in $\End(V)$.  Hence $R_J(n)$ has the same motivic class as the variety of pairs of commuting matrices, the second of which is nilpotent.  But these varieties are precisely those considered in the proof of \autoref{nil} in the case $q=1$.  We therefore  conclude that 
\[
U_{Q_1,W_J}(t) = U^{N,I}_q(t) \cdot U^{N,N}_q(t) = \Exp\rbrac{\frac{\LL}{\LL-1}\frac{t}{1-t}},
\]
proving \autoref{thm_jordan}.

\subsection{The deformed conifold}

In this section, we sketch the proof of \autoref{thm_qconifold}, 
which follows that of \autoref{thm_quantumC3}
and \cite[Sect. 2.2]{MMNS}.  We refer the reader to \cite{C} for full details.

 Using $I=\{a_1\}$ as the cut, we are lead 
to considering representations of the quiver in \autoref{fig_Q2_cut}, with the single relation $b_1a b_2=qb_2ab_1$, where $a = a_2$.
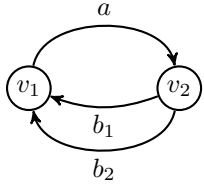
\begin{figure}[h]
\begin{center}
\begin{tikzpicture}[>=stealth',auto,node distance=2cm,
  thick,main node/.style={circle,draw,inner sep=2pt,minimum size=0.2cm}]
\node[main node] (1) {$v_1$};
\node[main node] (2) [right of=1] {$v_2$};
\draw [->] (1) .. controls (0.2,1) and (1.8,1) .. (2);
\draw [->] (2) .. controls (1.8,-1) and (0.2,-1) .. (1);
\draw [->] (2) .. controls (1.2,-0.3) and (0.8,-0.3) .. (1);
\draw (1,-0.5) node {$b_1$};
\draw (1,1.05) node {$a$};
\draw (1,-1.08) node {$b_2$};
\end{tikzpicture}
\caption{The cut of $Q_2$ along $a_1$}\label{fig_Q2_cut}
\end{center}
\end{figure}

Thus, to compute the generating series, we must consider the varieties
\begin{equation*}
\{(A,B_1,B_2)\in \Hom(V_1, V_2)\times\Hom(V_2, V_1)^{\times 2} \colon B_{1}AB_{2}-qB_{2}AB_{1}=0\},
\end{equation*}
where $d=(d_1, d_2)$ is a dimension vector for the quiver $Q_2$, and $V_i$ are fixed vector spaces
of dimension $d_i$.  Given an element $(A,B_{1},B_{2})$ of such a variety, consider the linear map 
\begin{equation*}
 A_{2}\oplus B_{2}\in \Hom(V_{0},V_{1})\oplus \Hom(V_{1},V_{0})\subset \End(V),
\end{equation*}
where $V = V_{0}\oplus V_{1}$.  As in \autoref{nilinvlem}, the relation implies that we can decompose $V = V^N\oplus V^I$ into subrepresentations on which $A \oplus B_2$ acts nilpotently and invertibly, respectively, so that the generating series factors 
\[
U_{Q_2,W_q} = U^N\cdot U^I
\]
into nilpotent and invertible contributions.

Once again, one shows that the series $U^N$ is independent of $q$, while the computation of the series $U^I$ can be reduced to the study of the $q$-commuting varieties of \autoref{sec_qC3_proof}.  Combining that calculation with the formulae in \cite[Sect. 2.2]{MMNS} for the undeformed conifold yields the result.

\subsection{The cyclic quiver} The proof of Theorem~\ref{thm:cyclicAn} proceeds analogously to the conifold case, using dimensional reduction and appropriate splittings, reducing the calculation to the case $q=1$ already done in~\cite{BM, Mor}. Once again, we refer for the details to~\cite{C}.

\bibliographystyle{hyperamsplain}
\bibliography{NCDT}

\providecommand{\bysame}{\leavevmode\hbox to3em{\hrulefill}\thinspace}
\providecommand{\MR}{\relax\ifhmode\unskip\space\fi MR }
\providecommand{\MRhref}[2]{%
  \href{http://www.ams.org/mathscinet-getitem?mr=#1}{#2}
}
\providecommand{\href}[2]{#2}
\begin{thebibliography}{10}

\bibitem{ATVdB}
M.~Artin, J.~Tate, and M.~Van~den Bergh, \emph{Some algebras associated to
  automorphisms of elliptic curves}, The {G}rothendieck {F}estschrift, {V}ol.\
  {I}, Progr. Math., vol.~86, Birkh\"auser Boston, Boston, MA, 1990,
  pp.~33--85.

\bibitem{ATVdB2}
\bysame, \emph{Modules over regular algebras of dimension {$3$}},
  \href{http://dx.doi.org/10.1007/BF01243916}{Invent. Math. \textbf{106}
  (1991)}, no.~2, 335--388.

\bibitem{A}
M.~Artin, \href{http://dx.doi.org/10.1090/conm/124/1144023}{\emph{Geometry of
  quantum planes}}, Azumaya algebras, actions, and modules ({B}loomington,
  {IN}, 1990), Contemp. Math., vol. 124, Amer. Math. Soc., Providence, RI,
  1992, pp.~1--15.

\bibitem{AS}
M.~Artin and W.~F. Schelter, \emph{Graded algebras of global dimension {$3$}},
  \href{http://dx.doi.org/10.1016/0001-8708(87)90034-X}{Adv. in Math.
  \textbf{66} (1987)}, no.~2, 171--216.

\bibitem{Be}
K.~Behrend, \emph{Donaldson-{T}homas type invariants via microlocal geometry},
  \href{http://dx.doi.org/10.4007/annals.2009.170.1307}{Ann. of Math. (2)
  \textbf{170} (2009)}, no.~3, 1307--1338.

\bibitem{BBS}
K.~Behrend, J.~Bryan, and B.~Szendr{\H{o}}i, \emph{Motivic degree zero
  {D}onaldson-{T}homas invariants},
  \href{http://dx.doi.org/10.1007/s00222-012-0408-1}{Invent. Math. \textbf{192}
  (2013)}, no.~1, 111--160.

\bibitem{BJL}
D.~Berenstein, V.~Jejjala, and R.~G. Leigh, \emph{Marginal and relevant
  deformations of {$N=4$} field theories and non-commutative moduli spaces of
  vacua}, \href{http://dx.doi.org/10.1016/S0550-3213(00)00394-1}{Nuclear Phys.
  B \textbf{589} (2000)}, no.~1-2, 196--248.

\bibitem{Bo}
L.~Borisov, \emph{Class of the affine line is a zero divisor in the
  Grothendieck ring}, \href{http://arxiv.org/abs/1412.6194}{{\tt 1412.6194}}.

\bibitem{BM}
J.~Bryan and A.~Morrison, \emph{Motivic classes of commuting varieties via
  power structures},
  \href{http://dx.doi.org/10.1090/S1056-3911-2014-00657-3}{J. Algebraic Geom.
  \textbf{24} (2015)}, no.~1, 183--199.

\bibitem{C}
A.~Cazzaniga, \emph{On Some Computations of Refined {D}onaldson-{T}homas
  Invariants}, DPhil Thesis, University of Oxford, 2015.

\bibitem{DM2}
B.~Davison and S.~Meinhardt, \emph{Motivic {DT}-invariants of (-2) curves},
  \href{http://arxiv.org/abs/1208.2462}{{\tt 1208.2462}}.

\bibitem{DM}
\bysame, \emph{Motivic {DT}-invariants for the one loop quiver with potential},
  Geom. Topol. (to appear), \href{http://arxiv.org/abs/1108.5956}{{\tt
  1108.5956}}.

\bibitem{DeL}
K.~De~Laet, \emph{Geometry of representations of quantum spaces},
  \href{http://arxiv.org/abs/1405.1938}{{\tt 1405.1938}}.

\bibitem{DeLLeB}
K.~De~Laet and L.~Le~Bruyn, \emph{The geometry of representations of
  3-dimensional {S}klyanin algebras},
  \href{http://dx.doi.org/10.1007/s10468-014-9515-6}{Algebr. Represent. Theory
  \textbf{18} (2015)}, no.~3, 761--776.

\bibitem{DL}
J.~Denef and F.~Loeser, \emph{Geometry on arc spaces of algebraic varieties},
  European {C}ongress of {M}athematics, {V}ol. {I} ({B}arcelona, 2000), Progr.
  Math., vol. 201, Birkh\"auser, Basel, 2001, pp.~327--348.

\bibitem{DG}
T.~Dimofte and S.~Gukov, \emph{Refined, motivic, and quantum},
  \href{http://dx.doi.org/10.1007/s11005-009-0357-9}{Lett. Math. Phys.
  \textbf{91} (2010)}, no.~1, 1--27.

\bibitem{DT}
S.~K. Donaldson and R.~P. Thomas, \emph{Gauge theory in higher dimensions}, The
  geometric universe ({O}xford, 1996), Oxford Univ. Press, Oxford, 1998,
  pp.~31--47.

\bibitem{FF}
W.~Feit and N.~J. Fine, \emph{Pairs of commuting matrices over a finite field},
  Duke Math. J \textbf{27} (1960), 91--94.

\bibitem{GSLMH}
S.~M. Gusein-Zade, I.~Luengo, and A.~Melle-Hern{\'a}ndez, \emph{A power
  structure over the {G}rothendieck ring of varieties},
  \href{http://dx.doi.org/10.4310/MRL.2004.v11.n1.a6}{Math. Res. Lett.
  \textbf{11} (2004)}, no.~1, 49--57.

\bibitem{I}
N.~K. Iyudu, \emph{Representation spaces of the {J}ordan plane},
  \href{http://dx.doi.org/10.1080/00927872.2013.788184}{Comm. Algebra
  \textbf{42} (2014)}, no.~8, 3507--3540.

\bibitem{JS}
D.~Joyce and Y.~Song, \emph{A theory of generalized {D}onaldson-{T}homas
  invariants}, \href{http://dx.doi.org/10.1090/S0065-9266-2011-00630-1}{Mem.
  Amer. Math. Soc. \textbf{217} (2012)}, no.~1020, iv+199.

\bibitem{KS}
M.~Kontsevich and Y.~Soibelman, \emph{Stability structures, motivic
  {D}onaldson--{T}homas invariants and cluster transformations},
  \href{http://arxiv.org/abs/0811.2435}{{\tt 0811.2435}}.

\bibitem{L}
E.~Looijenga, \emph{Motivic measures}, Ast\'erisque (2002), no.~276, 267--297.
  S{\'e}minaire Bourbaki, Vol. 1999/2000.

\bibitem{Mor}
A.~Morrison, \emph{Motivic invariants of quivers via dimensional reduction},
  \href{http://dx.doi.org/10.1007/s00029-011-0081-z}{Selecta Math. (N.S.)
  \textbf{18} (2012)}, no.~4, 779--797.

\bibitem{MMNS}
A.~Morrison, S.~Mozgovoy, K.~Nagao, and B.~Szendr{\H{o}}i, \emph{Motivic
  {D}onaldson-{T}homas invariants of the conifold and the refined topological
  vertex}, \href{http://dx.doi.org/10.1016/j.aim.2012.03.030}{Adv. Math.
  \textbf{230} (2012)}, no.~4-6, 2065--2093.

\bibitem{Moz}
S.~Mozgovoy, \emph{Wall-crossing formulas for framed objects},
  \href{http://dx.doi.org/10.1093/qmath/has010}{Q. J. Math. \textbf{64}
  (2013)}, no.~2, 489--513.

\bibitem{N}
K.~Nagao, \emph{{W}all-crossing of the motivic {D}onaldson--{T}homas
  invariants}, \href{http://arxiv.org/abs/1103.2922}{{\tt 1103.2922}}.

\bibitem{OU}
S.~Okawa and K.~Ueda, \emph{Noncommutative quadric surfaces and noncommutative
  conifolds}, \href{http://arxiv.org/abs/1403.0713}{{\tt 1403.0713}}.

\bibitem{Sze}
B.~Szendr{\H{o}}i, \emph{Non-commutative {D}onaldson-{T}homas invariants and
  the conifold}, \href{http://dx.doi.org/10.2140/gt.2008.12.1171}{Geom. Topol.
  \textbf{12} (2008)}, no.~2, 1171--1202.

\bibitem{T}
R.~P. Thomas, \emph{A holomorphic {C}asson invariant for {C}alabi-{Y}au
  3-folds, and bundles on {$K3$} fibrations}, J. Differential Geom. \textbf{54}
  (2000), no.~2, 367--438.

\bibitem{VdB}
M.~van~den Bergh, \emph{Non-commutative crepant resolutions}, The legacy of
  {N}iels {H}enrik {A}bel, Springer, Berlin, 2004, pp.~749--770.

\bibitem{W}
C.~Walton, \emph{Representation theory of three-dimensional {S}klyanin
  algebras}, \href{http://dx.doi.org/10.1016/j.nuclphysb.2012.02.015}{Nuclear
  Phys. B \textbf{860} (2012)}, no.~1, 167--185.

\end{thebibliography}

\end{document}